\definecolor{processblue}{cmyk}{0.96,0,0,0} \usepackage{enumerate}
\numberwithin{equation}{section}
\theoremstyle{plain} \newtheorem{theorem}{Theorem}[section]
\newtheorem{proposition}[theorem]{Proposition} \newtheorem{lemma}[theorem]{Lemma}
\newtheorem{corollary}[theorem]{Corollary}
\newtheorem{conjecture}[theorem]{Conjecture}
\theoremstyle{definition} \newtheorem{definition}[theorem]{Definition}
\newtheorem{example}[theorem]{Example} \newtheorem{remark}[theorem]{Remark}
\DeclareMathOperator{\Hess}{Hess}\DeclareMathOperator{\rk}{rk}
\DeclareMathOperator{\ann}{Ann}
\DeclareMathOperator{\Cat}{Cat}\DeclareMathOperator{\B}{\mathcal{B}}
\DeclareMathOperator{\K}{\mathsf{k}}
\DeclareMathOperator{\diag}{\mathrm{diag}}
\begin{document}

\author{Nasrin Altafi} \address{Department of Mathematics, KTH Royal
  Institute of Technology, S-100 44 Stockholm, Sweden}
\email{nasrinar@kth.se}

\title[]{Jordan types with small parts for Artinian Gorenstein algebras of codimension three}\keywords{Artinian Gorenstein algebra, Hilbert function, catalecticant matrix,  Hessians,  Macaulay dual  generators, Jordan type, partition.}
\subjclass[2010]{Primary: 13E10, 13D40; Secondary: 13H10, 05A17, 05E40.}
\maketitle

\begin{abstract}
We study Jordan types of linear forms for  graded Artinian  Gorenstein algebras having arbitrary codimension. We introduce rank matrices of linear forms for such algebras that represent the ranks of multiplication maps in various degrees. We show that there is a 1-1 correspondence between rank matrices and Jordan degree types. For Artinian Gorenstein algebras with codimension three we classify all rank matrices that occur for linear forms with vanishing third power. As a consequence,  we show for such algebras that the possible Jordan types with parts of length at most four are uniquely determined by at most three parameters.
\end{abstract}

\section{Introduction}
The Jordan type of a graded Artinian algebra $A$ and linear form $\ell$ is a partition determining the Jordan block decomposition for the (nilpotent) multiplication map by $\ell$ on $A$ which is denoted by $P_{\ell,A}=P_\ell$. Jordan type determines the weak and strong Lefschetz properties of Artinian  algebras. A graded Artinian algebra $A$ is said to satisfy the weak Lefschetz property (WLP) if multiplication map by a linear form on $A$ has maximal rank in every degree. If this holds for all powers of a linear form the algebra $A$ is said to have the strong Lefschetz property (SLP). It is known that an Artinian  algebra $A$ has the WLP if there is a linear form $\ell$ where the number of parts in $P_\ell$ is equal to the Sperner number of $A$, the maximum value of the Hilbert function $h_A$. Also $A$ has the SLP if there is a linear form $\ell$ such that $P_\ell = h^\vee_A$ the conjugate partition of $h_A$ see \cite{IMM}.
Jordan type of a linear form for an Artinian algebra captures more information than the weak and Strong Lefschetz properties. Recently, there has been studies about Jordan types of Artinian  algebras also in more general settings, see \cite{IMM, IMM2, IKVZ} and their references.  
Studying Artinian Gorenstein algebras is of great interest among the researchers in the area.   Gorenstein algebras are commutative Poincar\'e duality algebras \cite{MW} and thus natural algebraic objects to cohomology rings of smooth complex projective varieties. 
There has been many studies in the Lefschetz properties and Jordan types of Artinian  Gorenstein algebras \cite{CG, HW, GZ, Gondim, MH}.
  Gorenstein algebras of codimension two are  complete intersections and they all satisfy the SLP.  The list of all possible Jordan types of linear forms, not necessarily generic linear forms, for complete intersection algebras of codimension two is provided in \cite{CIJT}. 
   
In this article, we  study the ranks of multiplication maps by linear forms on graded Artinian  Gorenstein algebras that are quotients of polynomial ring $S = \K[x_1,\dots ,x_n]$ where $\K$ is a field of characteristic zero. In Section \ref{section-rksec},  we study such algebras with arbitrary codimension in terms of their Jordan types. We present an approach to determine the Jordan types of Artinian    Gorenstein algebras using Macaulay duality. We assign a natural invariant to an Artinian    Gorenstein algebra $A$ providing the ranks of multiplication maps by a linear form $\ell$ in different degrees, called \emph{rank matrix}, $M_{\ell,A}$,  Definition \ref{rkmatrix-def}. There is a 1-1 correspondence between rank matrices and so called \emph{Jordan degree types} in Proposition \ref{rkmatrix-1-1-JDT-prop}. We provide necessary conditions for a rank matrix in Lemmas \ref{diffO-seq} and \ref{additiveRank}.
We use this approach in Section  \ref{codim3section} for Artinian Gorenstein algebras in polynomial rings with three variables. We give a complete list of rank matrices that occur for some Artinian Gorenstein algebra $A$ and linear form $\ell$ where $\ell^3=0$ and $\ell^2\neq 0$, see Theorems \ref{3linesHFtheorem-even} and \ref{3linesHFtheorem-odd} for algebras with even and odd socle degrees respectively. As an immediate consequence in Corollary \ref{2linescorollary} we list rank matrices for linear forms where $\ell^2=0$. 
In Theorem \ref{JT-theorem} we prove that the Jordan types of Artinian Gorenstein algebras with codimension three and linear forms $\ell$ where $\ell^4=0$ is uniquely determined by the ranks of at most three multiplication maps, or equivalently, three mixed Hessians.
\section{Preliminaries}
Let $S = \K[x_1,\dots ,x_n]$ be a polynomial  ring equipped with standard grading over a field $\K$ of characteristic zero. Let $A=S/I$ be a graded Artinian  ( its Krull dimension is zero) algebra where $I$ is an homogeneous ideal. 
The \emph{Hilbert function} of a graded Artinian  algebra $A=S/I$ is  a vector of non-negative integers and we denote it by $h_A=(1,h_1,\dots ,h_d)$ where  $h_A(i)=h_i=\dim_{\K}(A_i)$. The integer $d$ is called the \emph{socle degree} of $A$, that is the largest integer $i$ such that $h_A(i)>0$. A  graded  Artinian  algebra $A$ is \emph{  Gorenstein} if $h_d=1$ and its Hilbert function is symmetric, i.e. $h_A(i)=h_A(d-i)$ for $0\leq i\leq d$.\par 
A famous result of F. H. S. Macaulay \cite{Macaulay}  provides a bound on the growth of Hilbert functions of  graded Artinian  algebras. F. H. S. Macaulay characterizes all vectors of non-negative integers that occur as Hilbert functions of standard graded algebras. Such a sequence is called an \emph{O-sequence}. \par 
Let $R=\K[X_1,\dots , X_n ]$ be the Macualay dual ring of $S$. Given a homogeneous ideal $I\subset S$ the \emph{inverse system} of $I$ is defined to be a graded $S$-module $M\subset R$ such that $S$ acts on  $R$ by differentiation. For more details of Macaulay's inverse system see \cite{Geramita} and \cite{IK}.
For graded Artinian    Gorenstein algebras the inverse system is generated by only one form.
\begin{theorem}\cite{MW}\label{dualGen}
 Let $A=S/I$ be a graded Artinian  algebra. Then $A$ is   Gorenstein if and only if there exists a polynomial $F\in R =\K[X_1,\dots ,X_n]$ such that $I=\ann_S(F)$.
 \end{theorem}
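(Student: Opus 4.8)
The plan is to prove both implications within the inverse-system (apolarity) formalism, whose single engine is the following elementary fact: in characteristic zero the pairing $S_j\times R_j\to\K$, $(f,G)\mapsto f\circ G$ --- where $\circ$ is the differentiation action, so $f\circ G\in R_0=\K$ when $\deg f=\deg G$ --- is perfect, being computed on monomials by $x^{\alpha}\circ X^{\beta}=\alpha!\,\delta_{\alpha\beta}$. Granting this, to each homogeneous ideal $I\subseteq S$ one attaches the graded $S$-submodule $I^{\perp}=\{\,G\in R : f\circ G=0\ \text{for all }f\in I\,\}\subseteq R$, with $(I^{\perp})_j=(I_j)^{\perp}$, hence $\dim_\K(I^{\perp})_j=\dim_\K A_j$; and conversely $\ann_S(I^{\perp})=I$ (a short bilinear-algebra check using homogeneity of $I$). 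One also checks that $G\mapsto(\bar f\mapsto f\circ G)$ is an isomorphism of graded $S$-modules $I^{\perp}\xrightarrow{\ \sim\ }\operatorname{Hom}_\K(A,\K)$ up to reversal of the grading, since $f\circ(\ell\circ G)=(\ell f)\circ G$. I would set up these two correspondences as standalone lemmas first.

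For ``$\Leftarrow$'': given $I=\ann_S(F)$, the algebra $A=S/I$ is automatically Artinian because $S_{>\deg F}\circ F=0$, and $\bar f\mapsto f\circ F$ is an isomorphism of $S$-modules $A\xrightarrow{\ \sim\ }S\circ F\subseteq R$ (using $(gf)\circ F=g\circ(f\circ F)$). Hence $\operatorname{soc}(A)=\{a:\mathfrak m a=0\}$ corresponds to $(S\circ F)\cap R_0$, which is one-dimensional because it contains the nonzero scalar $x^{\alpha}\circ F$ for a suitable $|\alpha|=\deg F$; therefore $A$ is Gorenstein. When $A$ is moreover graded one may take $F$ homogeneous of degree $d$ (replacing $F$ by a homogeneous generator of the graded cyclic module $S\circ F=I^{\perp}$), and then the perfect pairing makes the symmetric form $A_i\times A_{d-i}\to A_d\cong\K$, $(\bar f,\bar g)\mapsto fg\circ F$, non-degenerate, recovering $h_A(i)=h_A(d-i)$ and $h_A(d)=1$.

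For ``$\Rightarrow$'': let $A=S/I$ be graded Artinian Gorenstein of socle degree $d$; the property we use is $\dim_\K\operatorname{soc}(A)=1$. Put $M=I^{\perp}$. Transporting the standard finite-dimensional identity $\operatorname{soc}(A)\cong\bigl(A^{\vee}/\mathfrak m A^{\vee}\bigr)^{\vee}$ (with $\mathfrak m=A_+$, $A^{\vee}=\operatorname{Hom}_\K(A,\K)$) through the module isomorphism $M\cong A^{\vee}$ above yields $\dim_\K(M/\mathfrak m M)=\dim_\K\operatorname{soc}(A)=1$, so by graded Nakayama $M$ is generated by one homogeneous element $F$; comparing with $\dim_\K M_j=h_A(j)$ forces $\deg F=d$ and $M=S\circ F$. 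Finally $I=\ann_S(M)=\ann_S(S\circ F)=\ann_S(F)$, the last equality because $f\circ F=0$ implies $f\circ(g\circ F)=g\circ(f\circ F)=0$ for every $g\in S$.

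There is no deep obstacle; the effort is entirely in the bilinear-algebra bookkeeping. The step I would be most careful about is fixing the grading conventions in the module isomorphism $I^{\perp}\cong\operatorname{Hom}_\K(S/I,\K)$ precisely enough that the Matlis-type identity $\operatorname{soc}(A)\cong(A^{\vee}/\mathfrak m A^{\vee})^{\vee}$ transports cleanly to $M=I^{\perp}$ in the ``$\Rightarrow$'' direction, and in checking that $(-)^{\perp}$ and $\ann_S(-)$ are genuinely mutually inverse and $S$-equivariant on the relevant classes of ideals and submodules.
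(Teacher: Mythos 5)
This is a classical theorem that the paper quotes from \cite{MW} without giving any proof, so there is no in-paper argument to compare yours against; I can only assess the proposal on its own terms. Your outline is the standard Macaulay apolarity proof and is essentially correct and complete: the perfect pairing $S_j\times R_j\to\K$ in characteristic zero, the identifications $(I^\perp)_j=(I_j)^\perp$ and $\ann_S(I^\perp)=I$, the degree-reversing $S$-module isomorphism $I^\perp\cong\operatorname{Hom}_\K(A,\K)$, the transport of $\operatorname{soc}(A)\cong(A^\vee/\mathfrak m A^\vee)^\vee$ to conclude by graded Nakayama that $I^\perp$ is cyclic of generator degree $d$, and the double-annihilator identity $\ann_S(S\circ F)=\ann_S(F)$ are exactly the right ingredients, and each verification you defer is the routine bilinear-algebra check you say it is.

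Two points deserve explicit attention. First, the pivot of your ``$\Rightarrow$'' direction is the characterization of Gorenstein by $\dim_\K\operatorname{soc}(A)=1$, whereas the paper \emph{defines} Gorenstein by ``$h_d=1$ and $h_A(i)=h_A(d-i)$''. These are not literally equivalent: $\K[x,y]/(x^2,xy,y^3)$ has symmetric Hilbert function $(1,2,1)$ but two-dimensional socle, and its ideal is not the annihilator of any single form (indeed your own ``$\Leftarrow$'' argument shows every $\ann_S(F)$ has one-dimensional socle). So under the paper's literal definition the forward implication would be false; you should state explicitly that you work with the standard definition (one-dimensional socle, equivalently Poincar\'e duality), and note that your ``$\Leftarrow$'' direction then recovers the paper's conditions $h_d=1$ and symmetry of $h_A$ as consequences. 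Second, the statement allows an arbitrary polynomial $F$, and your reduction to a homogeneous generator via ``$S\circ F=I^\perp$'' silently invokes the double-annihilator theorem $(\ann_S(S\circ F))^\perp=S\circ F$, which is not among the two standalone lemmas you announced and should be added (or avoided entirely by reading $F\in R_d$ homogeneous, as the paper does in the sentence immediately following the theorem). Neither point is a gap in the mathematics, only in the bookkeeping you already flagged as the delicate part.
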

From  a  result by F. H. S. Macaulay \cite{F.H.S} it is known that an Artinian standard graded $\mathsf{k}$-algebra $A=S/I$ is Gorenstein if and only if there exists $F\in R_d$, such that $I=\ann_S(F)$.  T. Maeno and J. Watanabe \cite{MW} described higher Hessians of dual generator $F$ and provided a criterion for Artinian Gorenstein algebras having the SLP or WLP.
\begin{definition}\cite[Definition 3.1]{MW}
Let $F$ be a polynomial in $R$ and $A= S/\ann_S(F)$ be its associated Artinian Gorenstein algebra. Let $\mathcal{B}_{j} = \lbrace \alpha^{(j)}_i+\ann_S(F) \rbrace_i$ be a  $\mathsf{k}$-basis of $A_j$. The entries of the  $j$-th Hessian matrix of $F$ with respect to $\mathcal{B}_j$ are given  by
$$
(\Hess^j(F))_{u,v}=(\alpha^{(j)}_u\alpha^{(j)}_v \circ F).
$$
We note that when $j=1$ the form $\Hess^1(F)$ coincides with the usual Hessian. Up to  a non-zero constant multiple  $\det \Hess^j(F)$ is independent of the basis $\mathcal{B}_j$.  By abusing notation we will write   $\mathcal{B}_{j} = \lbrace \alpha^{(j)}_i \rbrace_i$ for a basis of $A_j$.
\end{definition}
R. Gondim and G. Zappal\`a \cite{GZ} introduced a generalization of Hessians which provides the rank of multiplication maps by powers a linear form which are not necessarily symmetric.
\begin{definition}\cite[Definition 2.1]{GZ}
Let $F$ be a polynomial in $R$ and $A= S/\ann_S(F)$ be its associated Gorenstein algebra. Let $\mathcal{B}_{j} = \lbrace \alpha^{(j)}_i\rbrace_i$ and $\mathcal{B}_{k} = \lbrace \beta^{(k)}_i\rbrace_i$ be $\mathsf{k}$-bases of $A_j$ and $A_k$ respectively. The \emph{Hessian matrix of order $(j,k)$} of $F$ with respect to $\mathcal{B}_j$  and $\mathcal{B}_k$ is 
$$
(\Hess^{(j,k)}(F))_{u,v}=(\alpha^{(j)}_u\beta^{(k)}_v \circ F).
$$
When $j=k$, $\Hess^{(j,j)}(F)=\Hess^{j}(F)$. 
\end{definition}
\begin{definition}
 Let $A= S/\ann(F)$ where $F\in R_d$. Pick bases $\B_j = \lbrace \alpha^{(j)}_u\rbrace_u$ and $\B_{d-j} = \lbrace \beta^{(d-j)}_u\rbrace_u$ be $\K$-bases of $A_j$ and $A_{d-j}$ respectively. The \emph{catalecticant matrix of $F$} with respect to $\B_j$ and $\B_{d-j}$ is 
$$
\Cat^j_F=(\alpha^{(j)}_u\beta^{(d-j)}_v F)_{u,v=1}.
$$
\end{definition}
The rank of the $j$-th catalecticant matrix of $F$ is equal to the Hilbert function of $A$ in degree $j$, see \cite[Definition 1.11]{IK}.

We recall the definition of the Jordan degree type for a graded Artinian algebra and linear form.
\begin{definition}{\cite[Definition 2.28]{IMM}}\label{JDT}
Let $A$ be a graded Artinian algebra and $\ell\in A_1$. Suppose that $P_{\ell,A}=(p_1,\dots ,p_t)$ is the Jordan type for $\ell$ and $A$, then there exist elements $z_1, \dots z_t\in A$, which depend on $\ell$, such that $\{\ell^iz_k\mid 1\leq k\leq t, 0\leq i\leq p_k-1\}$ is  a $\mathsf{k}$-basis for $A$. The Jordan blocks of the multiplication map by $ \ell$ is determined by the strings $\mathsf{s}_k=\{z_k, \ell z_k, \dots , \ell^{p_k-1}z_k\}$, and $A$ is the direct sum $A=\langle \mathsf{s}_1\rangle \oplus\dots \oplus  \langle \mathsf{s}_t\rangle$. Denote by $d_k$ the degree of $z_k$. Then the \emph{Jordan degree type},  is defined to be the indexed partition $\mathcal{S}_{\ell,A}=({p_1}_{d_1}, \dots ,{p_t}_{d_t})$.
\end{definition}
\section{Rank matrices for Artinian    Gorenstein algebras of linear forms}\label{section-rksec}
Throughout this section let $S=\K[x_1,\dots, x_n]$ be a polynomial ring with $n\geq 2$ variables equipped with standard grading over a filed  $\K$ of characteristic zero. We let $A=S/\ann(F)$ be a graded Artinian Gorenstein algebra with dual generator $F\in R=\K[X_1,\dots , X_n ]$ that is a homogeneous polynomial of degree $d\geq 2$.
\begin{definition}\label{rkmatrix-def}
Let $A=S/\ann(F)$ be an Artinian    Gorenstein algebra with socle degree $d$. For linear form $\ell\in A$ define the \textit{rank matrix}, $M_{\ell,A}$, of $A$ and $\ell$ to be  the upper triangular square matrix of size  $d+1$ with the  following $i,j$-th entry  
$$(M_{\ell,A})_{i,j} = \rk\left(\times \ell^{j-i} : A_i\longrightarrow A_j\right),$$
for every $i\leq j$. For $i>j$ we set $ (M_{\ell,A})_{i,j}=0$.
\end{definition}

\begin{definition}
Let $A=S/\ann(F)$  be an Artinian    Gorenstein algebra  with socle degree $d$ and linear form $\ell$. For each $0\leq i\leq d$ define the Artinian Gorenstein algebra, $A^{(i)}$, with the dual generator $\ell^i\circ F$
$$
A^{(i)} := S/\ann(\ell^i\circ F).
$$
\end{definition}
\noindent We note that when $i=0$ the algebra $A^{(0)}$ coincides with $A$. \par
\noindent 
\begin{remark}\label{r_ij-remark}
By the definition of higher and mixed Hessians for every $0\leq i<j$ we have that 
\begin{equation}
\rk \Hess^{(i,d-j)}_\ell (F) = (M_{\ell,A})_{i,j}.
\end{equation}
\end{remark} 
For each $0\leq i\leq d$ denote the $i$-the diagonal vector of $M_{\ell,A}$ by $\diag(i,M_{\ell,A})$,
$$\diag(i,M_{\ell,A}):=((M_{\ell,A})_{0,i},(M_{\ell,A})_{1,i+1},\dots ,(M_{\ell,A})_{d-i,d}).$$
We show that for every $0\leq i\leq d$ the vector $\diag(i,M_{\ell,A})$ is the Hilbert function of some Artinian    Gorenstein algebra.
We denote the Macaulay inverse system module of $A=S/\ann(F)$ by $\langle F\rangle$.
\begin{proposition}\label{diagprop}
Let $A=S/\ann(F)$ be an Artinian    Gorenstein algebra with socle degree $d\geq 2$ and $\ell$ be a linear form. Then
$$
\diag(i,M_{\ell,A}) = h_{A^{(i)}},
$$
for every $0\leq i\leq d$.
\end{proposition}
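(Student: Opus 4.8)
The plan is to identify the $i$-th diagonal vector of $M_{\ell,A}$ entry-by-entry with the catalecticant ranks of the dual generator $\ell^i\circ F$, and then invoke the standard fact that the rank of the $j$-th catalecticant of a form equals the Hilbert function of the associated Gorenstein algebra in degree $j$. Fix $i$ with $0\le i\le d$ and set $G=\ell^i\circ F$, so that $A^{(i)}=S/\ann(G)$. Note that $G$ is homogeneous of degree $d-i$, hence $A^{(i)}$ has socle degree $d-i$ and its Hilbert function has $d-i+1$ entries, which matches the length of $\diag(i,M_{\ell,A})$. So it suffices to prove, for each $0\le k\le d-i$, that
$$
(M_{\ell,A})_{k,\,k+i} \;=\; h_{A^{(i)}}(k).
$$

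The key computation is a reinterpretation of the multiplication map $\times\ell^{i}\colon A_k\to A_{k+i}$ via apolarity. First I would recall that $h_{A^{(i)}}(k)=\rk\Cat^k_G$, the rank of the catalecticant $\bigl(\alpha^{(k)}_u\beta^{(d-i-k)}_v\circ G\bigr)_{u,v}$ for chosen bases of the ambient degree-$k$ and degree-$(d-i-k)$ pieces of $S$. Expanding $G=\ell^i\circ F$, the $(u,v)$ entry becomes $\alpha^{(k)}_u\beta^{(d-i-k)}_v\ell^i\circ F$, which is exactly the $(u,v)$ entry of the mixed Hessian/catalecticant-type matrix $\Hess^{(k,\,d-i-k)}_{\ell}(F)$ appearing in Remark \ref{r_ij-remark} with the index pair $(k,k+i)$: indeed there $d-j=d-(k+i)=d-i-k$. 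By that remark, $\rk\Hess^{(k,\,d-(k+i))}_\ell(F)=(M_{\ell,A})_{k,k+i}$. Chaining these equalities gives $(M_{\ell,A})_{k,k+i}=\rk\Cat^k_G=h_{A^{(i)}}(k)$, as desired. One should be mildly careful that the bases chosen on the $A$-side and on the $A^{(i)}$-side can be taken compatibly (lifts of monomial bases in $S$), but rank is independent of basis choices, so this is harmless.

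The main obstacle — really the only subtlety — is making the identification between the multiplication-map rank $\rk(\times\ell^{i}\colon A_k\to A_{k+i})$ and the mixed-Hessian rank airtight, i.e. checking that the perfect pairings $A_k\times A_{d-k}\to\K$ and $A^{(i)}_k\times A^{(i)}_{d-i-k}\to\K$ induced by $F$ and by $\ell^i\circ F$ respectively are compatible under the natural surjections $S_k\twoheadrightarrow A_k$ and $S_k\twoheadrightarrow A^{(i)}_k$. Concretely: the rank of $\times\ell^i$ on $A$ equals the rank of the bilinear form $(p,q)\mapsto p\,q\,\ell^i\circ F$ on $S_k\times S_{d-i-k}$, because $\ann(F)_k$ and $\ann(F)_{d-j}$ are precisely the radicals of this pairing on each side once we also mod out by $\ann(\ell^i\circ F)$ — and the point is that elements of $\ann(F)$ in these degrees act as zero here anyway since $\ell^i\circ F$ is a derivative of $F$. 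Thus the same bilinear form computes both quantities, and Remark \ref{r_ij-remark} has already packaged half of this for us. I expect this to be a short argument once the apolarity bookkeeping is set up; the rest is formal.
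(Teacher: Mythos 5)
Your proof is correct and rests on the same core fact as the paper's: Macaulay duality identifies $\rk(\times\ell^{i}\colon A_k\to A_{k+i})$ with $\dim_{\K}\bigl(S/\ann(\ell^i\circ F)\bigr)_k$, i.e.\ with $h_{A^{(i)}}(k)$. The only cosmetic difference is that you package this through the catalecticant of $\ell^i\circ F$, the mixed-Hessian identification of Remark \ref{r_ij-remark}, and the observation that $\ann(F)$ lies in the radical of the pairing $(p,q)\mapsto pq\ell^i\circ F$ — verifying the equality in every degree — whereas the paper works directly with the inverse-system modules $\langle \ell^i\circ F\rangle$, checks degrees only up to $\lfloor\frac{d-i}{2}\rfloor$, and finishes by symmetry of $h_{A^{(i)}}$.
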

\begin{proof}
By the definition of rank matrix $M_{\ell,A}$ we have that the entries on the $i$-th diagonal of $M_{\ell,A}$ are exactly the ranks of multiplication map by $\ell^i$ on $A$ in various degrees.
Using Macaulay duality for every $0\leq j\leq \lfloor\frac{d-i}{2}\rfloor$ we get the following 
\begin{align*}
rk\left(\times \ell^{i} : A_j\longrightarrow A_{i+j}\right)&=rk\left(\circ \ell^{i} : \langle F\rangle_{i+j}\longrightarrow \langle F\rangle_{j}\right)\\&=\dim_{\K} \langle \ell^i\circ F \rangle_j\\
&=\dim_{\K}(S/\ann(\ell^i\circ F))_{j}.
\end{align*}
Note that the socle degree of $A^{(i)}$ is equal to $d-i$. The proof is complete since $h_{A^{(i)}}$ is symmetric about $\lfloor\frac{d-i}{2}\rfloor$.
\end{proof}
\begin{example}\label{firstEx}
Let $A=\K[x_1,x_2,x_3]/\ann(F)$ be Artinian Gorenstein algebra where $F=X_1^2X_2^2X_3^2$. We have that 
$h_A=\left(1,3,6,7,6,3,1\right)$. Consider $\ell=x_1$, then
$$h_{A^{(1)}}=h_{S/\ann(x_1\circ F)}=\left(1,3,5,5,3,1\right),\quad 
h_{A^{(2)}}=h_{S/\ann(x_1^2\circ F)}=\left(1,2,3,2,1\right),
$$
and $x_1^i\circ F=0$ for  $i\geq 3$.  Then the rank matrix is as follows 
$$M_{x_1,A}= \begin{pmatrix}
1&1&1&0&0&0&0\\
0&3&3&2&0&0&0\\
0&0&6&5&3&0&0\\
0&0&0&7&5&2&0\\
0&0&0&0&6&3&1\\
0&0&0&0&0&3&1\\
0&0&0&0&0&0&1\\
\end{pmatrix}.
$$
By Remark \ref{r_ij-remark} we have that
\begin{align*}
&\rk \Hess_{x_1}^{(0,5)}=\rk \Hess_{x_1}^{(0,4)}=1, \rk \Hess_{x_1}^{(1,4)}=3,\rk \Hess_{x_1}^{(1,3)}=2,\\
 & \rk \Hess_{x_1}^{(2,3)}=5,\hspace*{2mm}\text{and}\hspace*{2mm} \rk \Hess_{x_1}^{(2,2)}=3.
\end{align*}
\end{example}

Te following two lemmas provide conditions on every rank matrix $M_{\ell,A}$.
First  we set a notation. For a vector $\mathbf{v}$ of positive integers of length $l$ denote by $\mathbf{v}_+$ the vector of length $l+1$ obtained by adding zero to vector $\mathbf{v}$, that is $\mathbf{v}_+ = (0,\mathbf{v})$.

\begin{lemma}\label{diffO-seq}
For every $0\leq i\leq d-1$, the difference vector $ h_{A^{(i)}}-(h_{A^{(i+1)}})_+$ is an O-sequence.
\end{lemma}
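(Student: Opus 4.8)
The plan is to realise the difference vector $h_{A^{(i)}}-(h_{A^{(i+1)}})_+$ as the Hilbert function of an honest standard graded Artinian algebra, namely a suitable quotient of $A$; once that is done, being an O-sequence is immediate, since O-sequences are by definition the Hilbert functions of standard graded algebras.

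First I would translate the entries via Proposition \ref{diagprop}. Since $h_{A^{(i)}}=\diag(i,M_{\ell,A})$ and $h_{A^{(i+1)}}=\diag(i+1,M_{\ell,A})$, for $0\le j\le d-i$ we have $h_{A^{(i)}}(j)=\rk(\times\ell^{i}\colon A_j\to A_{i+j})=\dim_{\K}\ell^{i}A_j$ and, for $j\ge 1$, $h_{A^{(i+1)}}(j-1)=\rk(\times\ell^{i+1}\colon A_{j-1}\to A_{i+j})=\dim_{\K}\ell^{i+1}A_{j-1}$. The map $\times\ell^{i+1}$ on $A_{j-1}$ factors as $\times\ell^{i}$ after $\times\ell$, so $\ell^{i+1}A_{j-1}=\ell^{i}(\ell A_{j-1})\subseteq\ell^{i}A_j$; hence, setting $A_{-1}=0$, the $j$-th entry of $h_{A^{(i)}}-(h_{A^{(i+1)}})_+$ equals $\dim_{\K}\bigl(\ell^{i}A_j/\ell^{i+1}A_{j-1}\bigr)$ for every $0\le j\le d-i$.

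Next I would introduce the candidate algebra $B:=A/\bigl((\ell)+(0:_A\ell^{i})\bigr)$, where $(0:_A\ell^{i})=\{a\in A\colon \ell^{i}a=0\}$ is a homogeneous ideal of $A$; thus $B$ is a standard graded Artinian algebra (a quotient of $S$). The key computation is that, in each degree $j$, multiplication by $\ell^{i}$ is a surjection $A_j\twoheadrightarrow\ell^{i}A_j$ with kernel $(0:_{A_j}\ell^{i})$ carrying $\ell A_{j-1}$ onto $\ell^{i+1}A_{j-1}$, so it induces an isomorphism $B_j=A_j/\bigl(\ell A_{j-1}+(0:_{A_j}\ell^{i})\bigr)\cong\ell^{i}A_j/\ell^{i+1}A_{j-1}$. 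Combined with the previous step this gives $h_B=h_{A^{(i)}}-(h_{A^{(i+1)}})_+$, and we are done.

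I do not expect a genuine obstacle here; the two things to be careful about are the index bookkeeping --- the padded vector $(h_{A^{(i+1)}})_+$ has length $d-i+1$, matching $h_{A^{(i)}}$, because the socle degree of $A^{(i+1)}$ is $d-i-1$ --- and the degenerate case $\ell^{i}\circ F=0$ (equivalently $\ell^{i}=0$ in $A$), where $B=A^{(i)}=A^{(i+1)}=0$ and the difference is the zero vector, which the argument still covers with the convention $B_0\cong\ell^{i}A_0$. An essentially equivalent, perhaps cleaner packaging is to first note the natural isomorphism $A^{(i)}\cong A/(0:_A\ell^{i})$ (coming from $\ann_S(F)\subseteq\ann_S(\ell^{i}\circ F)$) together with $A^{(i+1)}=(A^{(i)})^{(1)}$, and then to prove the single identity $h_C-(h_{C^{(1)}})_+=h_{C/\ell C}$, valid for every Artinian Gorenstein algebra $C$ and linear form $\ell$, which is exactly the $i=0$ case of the lemma; this isolates the content at the cost of handling $C=A^{(d-1)}$ (socle degree $1$) separately, since Proposition \ref{diagprop} is stated for socle degree $\ge 2$.
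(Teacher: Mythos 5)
Your proof is correct, but it runs on the opposite side of Macaulay duality from the paper's. The paper stays entirely in the inverse system: it identifies the $j$-th entry of $h_{A^{(i)}}-(h_{A^{(i+1)}})_+$ with $\dim_{\K}\bigl(\langle \ell^i\circ F\rangle/\langle \ell^{i+1}\circ F\rangle\bigr)_j$ and concludes that the difference vector is the Hilbert function of that quotient of inverse system modules, hence an O-sequence. You instead build the standard graded algebra $B=A/\bigl((\ell)+(0:_A\ell^{i})\bigr)\cong A^{(i)}/\ell A^{(i)}$ and check, via the kernel computation for $\times\ell^{i}\colon A_j\to \ell^{i}A_j$, that $h_B$ is exactly the difference vector; the two constructions are essentially Matlis dual to one another (up to the grading reversal the paper leaves implicit). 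What your route buys is that the O-sequence conclusion is genuinely immediate, since $B$ is literally a cyclic standard graded $\K$-algebra with $B_0=\K$ when $\ell^i\neq 0$, whereas the paper's version tacitly uses that a cyclic graded $S$-module generated in a single degree has an O-sequence Hilbert function after regrading; what the paper's route buys is brevity and consistency with the inverse-system language used throughout Section \ref{section-rksec}. Your handling of the degenerate case $\ell^{i}=0$ matches the paper's, and your closing observation that everything reduces to the single identity $h_C-(h_{C^{(1)}})_+=h_{C/\ell C}$ applied to $C=A^{(i)}$ is a clean reformulation, provided one notes (as you do) that $A^{(i)}\cong A/(0:_A\ell^i)$ and $A^{(i+1)}=(A^{(i)})^{(1)}$.
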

\begin{proof}
Using Macaulay duality, for every $j\geq 1$ we have 
\begin{align*}
& h_{A^{(i)}}(j)-h_{A^{(i+1)}}(j-1) = \dim_{\K}\langle \ell^i\circ F\rangle_j -\dim_{\K}\langle \ell^{i+1}\circ F\rangle_{j-1}=\dim_{\K}\left(\langle \ell^i\circ F\rangle/\langle \ell^{i+1}\circ F\rangle\right)_{j}.
\end{align*}
For $j=0$ we have that $\dim_{\K}\left(\langle \ell^i\circ F\rangle/\langle \ell^{i+1}\circ F\rangle\right)_{0}=1$ if ${A^{(i)}}\neq 0$. If ${A^{(i)}}=0$ then clearly ${A^{(i+1)}}= 0$ and so  $ h_{A^{(i)}}-(h_{A^{(i+1)}})_+$ is the zero vector.\par 
\noindent  We conclude that $h_{A^{(i)}}-(h_{A^{(i+1)}})_+$ is the Hilbert function of $\left(\langle \ell^i\circ F\rangle/\langle \ell^{i+1}\circ F\rangle\right)$, and hence it is an O-sequence.
\end{proof}
\begin{lemma}\label{additiveRank}
For every $i,j\geq 1$, the following inequality holds 
$$
h_{A^{(i-1)}}(j)+h_{A^{(i+1)}}(j-1)\geq h_{A^{(i)}}(j)+h_{A^{(i)}}(j-1).
$$
\end{lemma}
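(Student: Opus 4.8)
The plan is to show that this inequality is a purely linear--algebraic fact about the ranks along a chain of three composable multiplication maps. First, by Proposition~\ref{diagprop} together with the definition of the rank matrix, for all integers $k\ge 0$ and $m\ge 0$ one has
$$h_{A^{(k)}}(m)=\rk\bigl(\times\ell^{k}\colon A_m\longrightarrow A_{m+k}\bigr),$$
where we adopt the convention $A_p=0$ for $p>d$; indeed, for $0\le m\le d-k$ this is the content of Proposition~\ref{diagprop}, while for $m>d-k$ both sides vanish (the left side by the symmetry of $h_{A^{(k)}}$ about $\lfloor(d-k)/2\rfloor$). In particular the four quantities in the statement are, in order, the ranks of the maps $\times\ell^{i-1}\colon A_j\to A_{j+i-1}$, $\times\ell^{i+1}\colon A_{j-1}\to A_{j+i}$, $\times\ell^{i}\colon A_j\to A_{j+i}$ and $\times\ell^{i}\colon A_{j-1}\to A_{j+i-1}$.

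Next I would set up the chain of multiplication maps
$$A_{j-1}\ \xrightarrow{\ u\ }\ A_j\ \xrightarrow{\ v\ }\ A_{j+i-1}\ \xrightarrow{\ w\ }\ A_{j+i},\qquad u=\times\ell,\quad v=\times\ell^{\,i-1},\quad w=\times\ell .$$
Because $\ell\cdot\ell^{i-1}=\ell^{i}$ and $\ell\cdot\ell^{i-1}\cdot\ell=\ell^{i+1}$ in $A$, the four ranks listed above are exactly $\rk v$, $\rk(wvu)$, $\rk(wv)$ and $\rk(vu)$, so the lemma is equivalent to the inequality
$$\rk v+\rk(wvu)\ \ge\ \rk(wv)+\rk(vu)$$
for these three maps.

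Finally I would prove this inequality for arbitrary composable $\K$-linear maps $u,v,w$ of finite-dimensional vector spaces. Put $N=\ker w$. Restricting $w$ to $\operatorname{im}v$ and using rank--nullity gives $\rk(wv)=\rk v-\dim(\operatorname{im}v\cap N)$; likewise, restricting $w$ to $\operatorname{im}(vu)$ gives $\rk(wvu)=\rk(vu)-\dim(\operatorname{im}(vu)\cap N)$. Since $\operatorname{im}(vu)\subseteq\operatorname{im}v$, we have $\operatorname{im}(vu)\cap N\subseteq\operatorname{im}v\cap N$, hence $\dim(\operatorname{im}(vu)\cap N)\le\dim(\operatorname{im}v\cap N)$; subtracting the two displayed identities then yields $\rk v-\rk(wv)\ge\rk(vu)-\rk(wvu)$, which rearranges to the desired inequality. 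I expect no real obstacle here; the only step worth spelling out carefully is the extension, noted above, of the identity in Proposition~\ref{diagprop} to all degrees $m$ rather than just the range $m\le\lfloor(d-k)/2\rfloor$ treated directly in its proof.
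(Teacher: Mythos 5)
Your proof is correct, and it takes a genuinely different route from the paper. The paper stays on the inverse-system side: it uses the inclusions $\langle \ell^{i+1}\circ F\rangle\subseteq\langle \ell^{i}\circ F\rangle$ to build a commutative diagram of short exact sequences, deduces that the successive quotients $\langle \ell^{i}\circ F\rangle/\langle \ell^{i+1}\circ F\rangle$ inject into one another in each degree, and then invokes Lemma~\ref{diffO-seq}, which identifies $h_{A^{(i)}}(j)-h_{A^{(i+1)}}(j-1)$ with $\dim_{\K}\bigl(\langle \ell^i\circ F\rangle/\langle \ell^{i+1}\circ F\rangle\bigr)_{j}$; the inequality is then a comparison of Hilbert functions of these quotient modules. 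You instead translate all four quantities into ranks of multiplication maps via Proposition~\ref{diagprop} and reduce the lemma to the Frobenius rank inequality $\rk(wv)+\rk(vu)\le \rk v+\rk(wvu)$ for the chain $A_{j-1}\xrightarrow{\times\ell}A_j\xrightarrow{\times\ell^{i-1}}A_{j+i-1}\xrightarrow{\times\ell}A_{j+i}$, which you prove correctly by intersecting $\operatorname{im}v$ and $\operatorname{im}(vu)$ with $\ker w$. Your argument is more elementary and more general --- it needs neither the Gorenstein hypothesis nor the inverse-system module structure once the identification of $h_{A^{(k)}}(m)$ with a rank is in place, and you are careful about the degrees $m>d-k$ where both sides vanish. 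What the paper's approach buys in exchange is structural: it realizes the differences $h_{A^{(i)}}(j)-h_{A^{(i+1)}}(j-1)$ as the Hilbert function of an actual graded module, which is exactly what powers the companion Lemma~\ref{diffO-seq} (the O-sequence condition), so the two necessary conditions of Corollary~\ref{cor-rkmatrix} come from one construction.
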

\begin{proof}
The inclusion map $\langle \ell^{i+1}\circ F\rangle\hookrightarrow \langle \ell^{i}\circ F\rangle$ for every $i\geq 0$ induces the following commutative diagram
$$
\xymatrix{
0\ar[r]& \langle \ell^{i+1}\circ F\rangle \ar[d]\ar[r]&\langle \ell^{i}\circ F\rangle  \ar[d]\ar[r]& \langle \ell^{i}\circ F\rangle/ \langle \ell^{i+1}\circ F\rangle \ar[d]^{\varphi}\ar[r]&0\\
0\ar[r]& \langle \ell^i\circ F\rangle\ar[r]&\langle \ell^{i-1}\circ F\rangle\ar[r]& \langle \ell^{i-1}\circ F\rangle/ \langle \ell^i\circ F\rangle\ar[r]&0\\
}
$$
which shows that $\varphi$ is also injective.
Using Lemma \ref{diffO-seq} we get that $h_{A^{(i)}}(j)-h_{A^{(i+1)}}(j-1) = \dim_{\K}\left(\langle \ell^i\circ F\rangle/\langle \ell^{i+1}\circ F\rangle\right)_{j},$ for every $i,j\geq 1$ that implies the desired inequality.
\end{proof}
\begin{remark}
The above lemma shows that for every $i,j\geq 1$ the following inequality holds
$$
\rk \Hess_\ell^{(j,d-i-j+1)}+\rk \Hess_\ell^{(j-1,d-i-j)}\geq \rk \Hess_\ell^{(j,d-i-j)}+\rk \Hess_\ell^{(j-1,d-i-j+1)}.
$$
\end{remark}
As a consequence of the above lemmas, we provide necessary conditions for an upper triangular square matrix of size $d+1$ with non-negative integers to occur for an Artinian Gorenstein algebra $A$ and linear form $\ell\in A_1$. 
\begin{corollary}\label{cor-rkmatrix}
Let $M$ be an upper triangular matrix of size $d+1$ with non-negative entries. Then $M$ is the rank matrix of some Artinian Gorenstein algebra $A$ and linear form $\ell\in A_1$, only if the following conditions are satisfied.
\begin{itemize}
\item[$(i)$] For every $0\leq i\leq d$, $\diag(i,M)$ is an O-sequences, and $h_A=\diag(0,M)$;
\item[$(ii)$] for every $0\leq i\leq d-1$, the difference vector $\diag(i,M)-\left(\diag(i+1,M)\right)_+$ is an O-sequences;
\item[$(iii)$] for any $2\times 2$ square submatrix of successive entries on and above the diagonal of $M$ of the form $\begin{pmatrix}
u&v\\
w&z\\
\end{pmatrix}$ we have that $w+v\geq u+z$.
\end{itemize}
\end{corollary}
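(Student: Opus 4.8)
The plan is to derive all three conditions directly from Proposition~\ref{diagprop}, Lemma~\ref{diffO-seq}, and Lemma~\ref{additiveRank}, since the corollary is essentially a restatement of those results phrased in terms of the matrix $M=M_{\ell,A}$. So I would assume $M=M_{\ell,A}$ for an Artinian Gorenstein algebra $A=S/\ann(F)$ and a linear form $\ell\in A_1$. For $(i)$, Proposition~\ref{diagprop} gives $\diag(i,M)=h_{A^{(i)}}$ for every $0\le i\le d$; each $A^{(i)}=S/\ann(\ell^i\circ F)$ is an Artinian standard graded algebra, so by Macaulay's theorem its Hilbert function is an O-sequence, and since $A^{(0)}=A$ one gets $h_A=\diag(0,M)$. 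For $(ii)$, Proposition~\ref{diagprop} turns $\diag(i,M)-(\diag(i+1,M))_+$ into $h_{A^{(i)}}-(h_{A^{(i+1)}})_+$, which Lemma~\ref{diffO-seq} identifies as the Hilbert function of $\langle \ell^i\circ F\rangle/\langle \ell^{i+1}\circ F\rangle$, hence an O-sequence.

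The only part that needs care is $(iii)$, which is a matter of matching indices. A $2\times 2$ block of successive entries of $M$ lying weakly above the diagonal has the form
$$
\begin{pmatrix} (M)_{a,b} & (M)_{a,b+1}\\ (M)_{a+1,b} & (M)_{a+1,b+1}\end{pmatrix},\qquad b\ge a+1,
$$
so that all four entries have row index at most column index. The point is that a down-and-right step along $M$ stays on the same diagonal, i.e.\ it keeps the power $\ell^{b-a}$ fixed, whereas a purely rightward or purely downward step shifts that power by one. Concretely, the entry $(M)_{k,k+m}$ is the $k$-th coordinate of $\diag(m,M)=h_{A^{(m)}}$ by Proposition~\ref{diagprop}, so setting $i:=b-a\ge 1$ and $j:=a+1\ge 1$ one reads off $u=(M)_{a,b}=h_{A^{(i)}}(j-1)$, $z=(M)_{a+1,b+1}=h_{A^{(i)}}(j)$, $v=(M)_{a,b+1}=h_{A^{(i+1)}}(j-1)$, and $w=(M)_{a+1,b}=h_{A^{(i-1)}}(j)$. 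Then $w+v\ge u+z$ becomes exactly
$$
h_{A^{(i-1)}}(j)+h_{A^{(i+1)}}(j-1)\ge h_{A^{(i)}}(j)+h_{A^{(i)}}(j-1),
$$
which is the content of Lemma~\ref{additiveRank}, valid since $i,j\ge 1$.

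I do not expect a genuine obstacle here: once the dictionary between matrix coordinates $(a,b)$ and the pair (diagonal index $m$, position $k$ along that diagonal) is fixed, all three parts follow from results already established, and $(iii)$ is the only place where one could slip on the bookkeeping of indices. The degenerate cases in which some $A^{(m)}$ is zero are harmless, since O-sequences may be the zero vector and the inequality of Lemma~\ref{additiveRank} is stated so as to cover them as well.
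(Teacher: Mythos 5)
Your proposal is correct and follows exactly the paper's route: the paper's proof is the one-line remark that the corollary is an immediate consequence of Proposition~\ref{diagprop} and Lemmas~\ref{diffO-seq} and~\ref{additiveRank}, and you have simply written out the details, including the correct index dictionary $u=h_{A^{(i)}}(j-1)$, $z=h_{A^{(i)}}(j)$, $v=h_{A^{(i+1)}}(j-1)$, $w=h_{A^{(i-1)}}(j)$ with $i=b-a$, $j=a+1$ that turns condition $(iii)$ into Lemma~\ref{additiveRank}.
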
 
\begin{proof}
It is an immediate consequence of Proposition \ref{diagprop} and Lemmas \ref{diffO-seq} and \ref{additiveRank}.
\end{proof}
\begin{example}
The following matrix does not occur as the rank matrix of some Artinian Gorenstein algebra and linear form $\ell$.
$$ M=\begin{pmatrix}
1&1&1&0&0&0\\
0&3&2&2&0&0\\
0&0&3&3&2&0\\
0&0&0&3&2&1\\
0&0&0&0&3&1\\
0&0&0&0&0&1\\
\end{pmatrix}.
$$
Since condition $(ii)$ in Corollary \ref{cor-rkmatrix} is not satisfied; in fact $\diag(0,M)-(\diag(1,M))_+=(1,3,3,3,3,1)-(0,1,2,3,2,1)=(1,2,1,0,1,0)$ is not an O-sequence.\par 
Corollary \ref{cor-rkmatrix} also implies that the following matrix is not a possible rank matrix for some $A$ and $\ell$.
$$ N=\begin{pmatrix}
1&1&1&0&0&0\\
0&3&3&1&0&0\\
0&0&5&4&1&0\\
0&0&0&5&3&1\\
0&0&0&0&3&1\\
0&0&0&0&0&1\\
\end{pmatrix}.
$$
In fact, for submatrix $\begin{pmatrix} 3&1\\5&4\end{pmatrix}$ the condition $(iii)$ is not satisfied.
\end{example}
\begin{definition}[Jordan degree type matrix] Let $A=S/\ann(F)$ be an Artinian    Gorenstein algebra and $\ell\in A$ a linear form. Assume that $M_{\ell,A}$ is the rank matrix of $A$ and $\ell$. We define
the \emph{Jordan degree type matrix}, $J_{\ell,A}$, of $A$ and $\ell$ to be the upper triangular matrix with the following non-negative entries 
\begin{align}\label{J(A,l)definition}
(J_{\ell,A})_{i,j} :
=& (M_{\ell,A})_{i,j}+(M_{\ell,A})_{i-1,j+1}-(M_{\ell,A})_{i-1,j}-(M_{\ell,A})_{i,j+1},
\end{align}
where we set $(M_{\ell,A})_{i,j}$ if either  $i< 0$ or $j< 0$.
\begin{equation}\label{JDT_ij}
(J_{\ell,A})_{i,j} = h_{A^{(k)}}(i)+h_{A^{(k+2)}}(i-1)-h_{A^{(k+1)}}(i-1)-h_{A^{(k+1)}}(i),
\end{equation}
such that for every $k$, $h_{A^{(k)}}(-1):=0$.
\end{definition}
Recall from Lemma \ref{additiveRank} that for each $0\leq i\leq j$, $(J_{\ell,A})_{ij}$ is non-negative.
\begin{proposition}\label{rkmatrix-1-1-JDT-prop}
There is a 1-1 correspondence between the two matrices $M_{\ell,A}$ and $J_{\ell,A}$ associated to a pair $(A,\ell)$.
\end{proposition}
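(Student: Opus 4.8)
The plan is to read equation \eqref{J(A,l)definition} as the statement that $J_{\ell,A}$ is an explicit linear function of $M_{\ell,A}$ — so one direction of the correspondence is automatic — and then to produce a closed-form inverse and check it by a short telescoping argument; essentially everything reduces to summing a second difference twice.

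First I would fix conventions at the boundary. Extending the matrices by $0$ outside the index range $\{0,\dots,d\}^2$ (and keeping $(M_{\ell,A})_{i,j}=0$ for $i>j$, as in Definition \ref{rkmatrix-def}) is consistent, since every out-of-range entry records the rank of a multiplication map one of whose ends is the zero space; with this convention \eqref{J(A,l)definition} holds for all $i,j\in\mathbb Z$, and both matrices have finite support. I then claim the inverse map is the corner-cumulative sum
\[
(M_{\ell,A})_{i,j}=\sum_{a\le i}\sum_{b\ge j}\,(J_{\ell,A})_{a,b},
\]
a finite sum. To verify that this recovers $M_{\ell,A}$ from $J_{\ell,A}$, I would substitute \eqref{J(A,l)definition} and sum in two stages: for fixed $a$, summing the four terms over $b\ge j$, the part $(M_{\ell,A})_{a,b}-(M_{\ell,A})_{a,b+1}$ telescopes to $(M_{\ell,A})_{a,j}$ and the part $(M_{\ell,A})_{a-1,b+1}-(M_{\ell,A})_{a-1,b}$ telescopes to $-(M_{\ell,A})_{a-1,j}$ (boundary terms vanish by finite support), so the inner sum is $(M_{\ell,A})_{a,j}-(M_{\ell,A})_{a-1,j}$; summing this over $a\le i$ telescopes once more to $(M_{\ell,A})_{i,j}$. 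For the other composition I would run the same computation in reverse: starting from any finitely supported matrix $J$ and setting $M_{i,j}:=\sum_{a\le i,\,b\ge j}J_{a,b}$, the differences $M_{i,j}-M_{i-1,j}=\sum_{b\ge j}J_{i,b}$ and $M_{i-1,j+1}-M_{i,j+1}=-\sum_{b\ge j+1}J_{i,b}$ add up to $J_{i,j}$, which is exactly the right-hand side of \eqref{J(A,l)definition}. Thus $M\mapsto J$ and $J\mapsto M$ are mutually inverse, so they restrict to a bijection between the rank matrices and the Jordan degree type matrices arising from pairs $(A,\ell)$.

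I expect the only delicate point to be this boundary bookkeeping — choosing the zero-extension so that \eqref{J(A,l)definition} is globally valid and the telescoping sums are genuinely finite — rather than anything structural; nothing about Gorenstein-ness, Macaulay duality, or Lemma \ref{additiveRank} is needed for the bijection itself, Lemma \ref{additiveRank} entering only to make the entries of $J_{\ell,A}$ non-negative. As a consistency check I would note that the closed form makes the combinatorial meaning transparent: in the notation of Definition \ref{JDT}, $(J_{\ell,A})_{i,j}$ counts the Jordan strings $\mathsf s_k$ beginning in degree $i$ and ending in degree $j$, while $(M_{\ell,A})_{i,j}=\rk(\times\ell^{j-i}\colon A_i\to A_j)$ counts exactly those strings beginning in some degree $a\le i$ and ending in some degree $b\ge j$, which is the displayed sum.
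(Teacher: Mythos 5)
Your proof is correct and takes essentially the same route as the paper: both invert the mixed second difference \eqref{J(A,l)definition} by cumulative summation, the paper doing it in two one-dimensional steps through the intermediate matrix $J^\prime_{\ell,A}$ of \eqref{J'matrixdef}--\eqref{JfromJ'def}, and you doing it in a single closed-form corner sum $\sum_{a\le i,\,b\ge j}(J_{\ell,A})_{a,b}$ verified by telescoping. The only substantive difference is that your closed form is the correctly unrolled version of the paper's recovery formulas, whose displayed right-hand sides $(J_{\ell,A})_{i,j}+(J_{\ell,A})_{i-1,j}$ and $(J^\prime_{\ell,A})_{i,j}+(J^\prime_{\ell,A})_{i,j+1}$ only invert the construction when read recursively (with $(J^\prime_{\ell,A})_{i-1,j}$, respectively $(M_{\ell,A})_{i,j+1}$, as the second summand), so your explicit formula and boundary bookkeeping are, if anything, tidier.
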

\begin{proof}

We use Equation (\ref{J(A,l)definition}) to provide an algorithm to obtain $J_{\ell,A}$ from $M_{\ell,A}$. For each $0\leq i\leq j$ define matrix $J^\prime_{\ell,A}$ as the following 
\begin{equation}\label{J'matrixdef}
(J^\prime_{\ell,A})_{i,j} := (M_{\ell,A})_{i,j}-(M_{\ell,A})_{i,j+1},
\end{equation}
where we set $(M_{\ell,A})_{i,j}=0$ if either $i< 0$ or $j< 0$.
Then define the upper triangular matrix $J_{\ell,A}$ where its entry $i,j$ for every $0\leq i\leq j$ is equal to 
\begin{equation}\label{JfromJ'def}
(J_{\ell,A})_{i,j} = (J^\prime_{\ell,A})_{i,j}-(J^\prime_{\ell,A})_{i-1,j},
\end{equation}
where we set $(J^\prime_{\ell,A})_{i,j}=0$  if either $i< 0$ or $j< 0$.

We obtain  $M_{\ell,A}$ from $J^\prime_{\ell,A}$ in two steps.
First we get the matrix $J^\prime_{\ell,A}$ from $J_{\ell,A}$. For each $0\leq i\leq j$,  we have the following
\begin{equation}
(J^\prime_{\ell,A})_{i,j} = (J_{\ell,A})_{i,j}+(J_{\ell,A})_{i-1,j},
\end{equation}
where we set $(J_{\ell,A})_{i,j}=0$ if either $i< 0$ or $j< 0$.
Then for each $0\leq i\leq j$, 
\begin{equation}
(M_{\ell,A})_{i,j}=(J^\prime_{\ell,A})_{i,j}+(J^\prime_{\ell,A})_{i,j+1},
\end{equation}
where we set $(J^\prime_{\ell,A})_{i,j}=0$ if either $i< 0$ or $j< 0$.
\end{proof}
\begin{example}
We illustrate the procedure provided in Proposition \ref{rkmatrix-1-1-JDT-prop} for the Artinian  Gorenstein algebra given in Example \ref{firstEx} with the rank matrix $M_{\ell,A}$. Using Equations (\ref{J'matrixdef}) and  (\ref{JfromJ'def}) we get the following matrices.
$$M_{\ell,A}= \begin{pmatrix}
1&1&1&0&0&0&0\\
0&3&3&2&0&0&0\\
0&0&6&5&3&0&0\\
0&0&0&7&5&2&0\\
0&0&0&0&6&3&1\\
0&0&0&0&0&3&1\\
0&0&0&0&0&0&1\\
\end{pmatrix},
\hspace*{0mm} J^\prime_{\ell,A} = \begin{pmatrix}
0&0&1&0&0&0&0\\
0&0&1&2&0&0&0\\
0&0&1&2&3&0&0\\
0&0&0&2&3&2&0\\
0&0&0&0&3&2&1\\
0&0&0&0&0&2&1\\
0&0&0&0&0&0&1\\
\end{pmatrix}, \hspace*{0mm} J_{\ell,A} = \begin{pmatrix}
0&0&1&0&0&0&0\\
0&0&0&2&0&0&0\\
0&0&0&0&3&0&0\\
0&0&0&0&0&2&0\\
0&0&0&0&0&0&1\\
0&0&0&0&0&0&0\\
0&0&0&0&0&0&0\\
\end{pmatrix}. $$
\end{example}
Define the decreasing sequence  $\mathbf{d}:=(\dim_{\mathsf{k}}A^{(0)}, \dim_{\mathsf{k}}A^{(1)}, \dots , \dim_{\mathsf{k}}A^{(d)})$, and recall that the second difference sequence of $\mathbf{d}$ is denoted by $\Delta^2 \mathbf{d}$ and its  $i$-th entry is given by $$\Delta^2 \mathbf{d}(i)=\dim_{\mathsf{k}}A^{(i)}+\dim_{\mathsf{k}}A^{(i+2)}-2\dim_{\mathsf{k}}A^{(i+1)},$$
where we set $\dim_{\mathsf{k}}A^{(i)}=0$ for $i>d$.
\begin{proposition}\label{JT}
Let $A=S/\ann(F)$ be an Artinian  Gorenstein algebra with socle degree $d\geq 2$ and let $\ell\in A$ be a linear form. Then the Jordan type partition of $\ell$ for $A$ is given by
$$
P_{\ell,A} = \big(\underbrace{d+1,\dots ,d+1}_{n_{d}} ,\underbrace{d,\dots ,d}_{n_{d-1}} ,\dots ,\underbrace{2,\dots ,2}_{n_1},\underbrace{1,\dots ,1}_{n_0}\big),
$$
such that $\mathbf{n}=(n_0,n_{1}, \dots , n_d)=\Delta^2 \mathbf{d}.$
\end{proposition}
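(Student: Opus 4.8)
The plan is to recognize $P_{\ell,A}$ as the Jordan type of the single nilpotent $\K$-linear operator $N:=(\,\times \ell\,)\colon A\to A$ — nilpotent because $A_j=0$ for $j>d$, so in fact $N^{d+1}=0$ — and then to read off the multiplicities of its Jordan blocks from the ranks $\rk N^{0},\rk N^{1},\rk N^{2},\dots$ via the classical linear-algebra formula for nilpotent operators. The content of the proposition is then simply a translation of those ranks into the invariants $A^{(i)}$.

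The first step is to show that $\rk N^{i}=\dim_{\K}A^{(i)}=\mathbf d(i)$ for every $i\ge 0$. Since $N^{i}$ is homogeneous of degree $i$, its rank splits as a sum over graded pieces, $\rk N^{i}=\sum_{j}\rk\bigl(\times\ell^{i}\colon A_{j}\to A_{j+i}\bigr)=\sum_{j}(M_{\ell,A})_{j,j+i}$; that is, $\rk N^{i}$ is the sum of the entries on the $i$-th diagonal of the rank matrix. By Proposition \ref{diagprop} that diagonal is $h_{A^{(i)}}$, whose entries sum to $\dim_{\K}A^{(i)}$; for $i>d$ both sides vanish, consistent with the convention $\dim_{\K}A^{(i)}=0$. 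The second step is purely linear algebra: for a nilpotent operator $N$ on a finite-dimensional space $A$ one has $\#\{\text{Jordan blocks of size}\ge k\}=\dim_{\K}\ker N^{k}-\dim_{\K}\ker N^{k-1}$, and since $\dim_{\K}\ker N^{k}=\dim_{\K}A-\rk N^{k}$ this gives that the number of blocks of size exactly $k$ equals $\rk N^{k-1}-2\rk N^{k}+\rk N^{k+1}$. Substituting $\rk N^{i}=\mathbf d(i)$, this multiplicity is $\mathbf d(k-1)-2\mathbf d(k)+\mathbf d(k+1)=\Delta^{2}\mathbf d(k-1)=n_{k-1}$.

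Finally, since $N^{d+1}=0$ no part of $P_{\ell,A}$ exceeds $d+1$, so the previous step accounts for all parts; grouping them by size $k=d+1,d,\dots,1$ and using that the part $k$ occurs $n_{k-1}$ times yields precisely the displayed partition with $\mathbf n=\Delta^{2}\mathbf d$. The argument is routine; the only points requiring care are the bookkeeping identity $\rk N^{i}=\sum_{j}(M_{\ell,A})_{j,j+i}$ that links the graded structure to the ungraded operator $N$, together with the boundary conventions ($\dim_{\K}A^{(i)}=0$ for $i>d$, and $h_{A^{(k)}}(-1)=0$), so I do not anticipate a genuine obstacle. As an alternative packaging, the same count can be obtained from the matrix $J_{\ell,A}$: summing the defining relation \eqref{J(A,l)definition} along its $m$-th superdiagonal and applying Proposition \ref{diagprop} diagonal by diagonal collapses telescopically to $\mathbf d(m)-2\mathbf d(m+1)+\mathbf d(m+2)$, giving $n_m$ again.
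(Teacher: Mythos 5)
Your proposal is correct and follows essentially the same route as the paper: both identify $\rk(\times\ell^i:A\to A)$ with $\dim_{\K}A^{(i)}$ via Macaulay duality (you route this through Proposition \ref{diagprop}, the paper re-derives it directly) and then apply the standard Jordan-block count for a nilpotent operator, which the paper phrases as taking the dual of the first-difference partition and you phrase equivalently as the second-difference multiplicity formula.
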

\begin{proof}
The Jordan type partition of $\ell$ for $A$ is equal to the dual partition of the following partition 
\begin{equation}\label{jordantype}
\Big(\rk (\times \ell^0)-\rk (\times \ell^1),\rk (\times \ell^1)-\rk (\times \ell^2),\dots , \rk (\times \ell^{d-1})-\rk (\times \ell^d),\rk (\times \ell^d)\Big).
\end{equation}
Since for each $0\leq i\leq d$ the rank of the multiplication map $ \times \ell^{i}:A_j\longrightarrow A_{j+i}$ is equal to the rank of differentiation map  $\circ \ell^i :\langle F \rangle_{i+j}\longrightarrow\langle F\rangle_{j}$, where $\langle F\rangle$  is the dual algebra to $A$. Thus the rank of $ \times \ell^i:A_j\longrightarrow A_{j+i}$ is equal to $\dim_{\K}\left( S/\ann(\ell^i\circ F)\right)_j$ and therefore we have 
$$\rk \left( \times \ell^i: A\longrightarrow A\right) = \sum^{d-i}_{j=0} \dim_{\K}\left( S/\ann(\ell^i\circ F)\right)_j=\dim_{\mathrm{k}}A^{(i)}.$$
So (\ref{jordantype}) is equal to the following partition
$$
\big(\dim_{\mathrm{k}}A^{(0)}-\dim_{\mathrm{k}}A^{(1)},\dim_{\mathrm{k}}A^{(1)}-\dim_{\mathrm{k}}A^{(2)},\dots ,\dim_{\mathrm{k}}A^{({d-1})}-\dim_{\mathrm{k}}A^{(d)},\dim_{\mathrm{k}}A^{(d)}\big).
$$
The dual partition to the above partition is the Jordan type partition of $A$ and $\ell$ as we claimed.
\end{proof}

\begin{example}
Consider the Artinian    Gorenstein algebra given in Example \ref{firstEx} and linear form $\ell=x_1$. The Jordan degree type matrix of $A$ and $\ell$ is equal to the following matrix 
$$J_{\ell,A} = \begin{pmatrix}
0&0&1&0&0&0&0\\
0&0&0&2&0&0&0\\
0&0&0&0&3&0&0\\
0&0&0&0&0&2&0\\
0&0&0&0&0&0&1\\
0&0&0&0&0&0&0\\
0&0&0&0&0&0&0\\
\end{pmatrix}.
$$
We have that  $P_{\ell,A} = (\underbrace{3, \dots ,3}_9)$. In order to obtain the Jordan degree type $\mathcal{S}_{\ell,A}$ we recall the Definition \ref{JDT} and note that the degree of each part in $P_{\ell,A}$ is equal to the row index of the corresponding entry in $J_{\ell,A}$, so  $\mathcal{S}_{\ell,A}=(3_0,3_1,3_1,3_2,3_2,3_2,3_3,3_3,3_4)$.
\end{example}

\begin{remark}
Equation (\ref{JDT_ij}) may be expressed in terms of the mixed Hessians.
\begin{small}
\begin{equation}
(J_{\ell,A})_{i,j} = \rk\Hess_\ell^{(i,d-i-k)}(F)+\rk\Hess_\ell^{(i-1,d-i-k-1)}(F)-\rk\Hess_\ell^{(i-1,d-i-k)}(F)-\rk\Hess_\ell^{(i,d-i-k-1)}(F).
\end{equation}
\end{small}
This recovers a result by R. Gondim and B. Costa \cite[Theorem 4.7]{CG} determining Jordan types of  Artinian Gorenstein algebras and linear forms using the ranks of mixed Hessians. 
\end{remark}

\section{Jordan types of Artinian  Gorenstein algebras of codimension three}\label{codim3section}
In this section we consider graded Artinian Gorenstein quotients of $S=\K[x,y,z]$ where $\mathrm{char}(\K)=0$. For an  Artinian  Gorenstein algebra  $A=S/\ann(F)$ with dual generator $F\in R=\mathsf{k}[X,Y,Z]$ of degree $d\geq 2$ and a linear form $\ell$ we explain how we find the rank matrix $M_{\ell,A}$, and as a consequence the Jordan type $P_{\ell,A}$.  

Let $L_1, L_2,L_3$ be linear forms in the dual ring $R=\mathsf{k}[X,Y,Z]$  such that $\ell\circ L_1\neq 0$ and $\ell\circ L_2=\ell\circ L_3 = 0$.
By linear change of coordinates we may assume that $L_1=X$, $L_2=Y$ and $L_3=Z$. Then $F$ can be written in the following form 
$$
F = \sum_{i=0}^dX^iG_{d-i},
$$
where for each $0\leq i\leq d$, $G_{d-i}$ is a homogeneous polynomial of degree $d-i$ in the variables $Y$ and $Z$. In general $G_{d-i}$ could be a zero polynomial for some $i$.

\subsection{Jordan types with parts of length at most four}\label{length3}

We will provide the list of all possible rank matrices $M_{\ell,A}$ such that $A$ is an Artinian Gorenstein algebra and $\ell$ is a linear form in $A$ where $\ell^3=0$. 
Assuming  $\ell^3=0$ implies that $M_{\ell,A}$ has at most three  non-zero diagonals. Consequently, we provide a formula to compute the  Jordan type partitions for Artinian Gorenstein algebras and linear forms $\ell$ such that $\ell^4=0$, which are Jordan types with parts of length at most four.

Consider Artinian  Gorenstein algebra $A=S/\ann(F)$ with socle degree $d\geq 2$ and linear form $\ell$ such that $\ell^3=0$. Without loss of generality we assume that $\ell=x$ and that $F$ is in the following form
\begin{equation}\label{F}
F = X^2G_{d-2}+XG_{d-1}+G_d,
\end{equation}
where
$$G_{d}=\sum_{j=0}^{d}\frac{a_{j}}{j!(d-j)!}{Y^{d-j}Z^{j}}, \quad  G_{d-1}=\sum_{j=0}^{d-1}\frac{b_{j}}{j!(d-j-1)!}{Y^{d-j-1}Z^{j}},$$
and 
$$ G_{d-2}=\frac{1}{2}\sum_{j=0}^{d-2}\frac{c_{j}}{j!(d-j-2)!}{Y^{d-j-2}Z^{j}}.$$
In order to make the computations simpler, we choose the coefficients of the terms in $F$ in a way that the entries of the catalecticant matrices of $F$ are either zero or one.

We first consider the case when $\ell^3=0$ but $\ell^2\neq 0$. Therefore, we assume that  $G_{d-2}\neq 0$ since otherwise we get $\ell^2=0$. Recall that $A^{(0)}=A$, $A^{(1)}=S/\ann(\ell\circ F)$, $A^{(2)}=S/\ann(\ell^2\circ F)$ and $A^{(i)}=S/\ann(\ell^i\circ F)=0$, for every $i\geq 3$.

We determine all rank matrices that occur for such algebras and linear forms $\ell$ where $\ell^3=0$. Equivalently, we determine all possible Hilbert functions for $A, A^{(1)}$ and $A^{(2)}$. The rank matrices are slightly different for even  and odd socle degrees, as excepted, thus we treat these cases separately. We first prove our result for Artinian Gorenstein algebras with even socle degree $d\geq 2$. Later, in  similar cases for odd socle degrees we refer to the relevant proof given for even socle degrees. 

We will show in the theorems bellow that the rank matrix, $M_{\ell,A}$, for $\ell^3=0$ and $A$ is determined by three of its entries. These entries are exactly  the maximum values in non-zero diagonals of $M_{\ell,A}$, that are maximum values of $h_A$, $h_{A^{(1)}}$ and $h_{A^{(2)}}$. The maximum value of the Hilbert function of an Artinian Gorenstein algebra is obtained in the middle degree. We denote by $r,s$ and $t$ the maximum value for the Hilbert function of $h_{A^{(2)}}, h_{A^{(1)}}$ and $h_A$ respectively. We first provide all possible triples $(r,s,t)$.

\begin{lemma}[Even socle degree]\label{maxvaluesevenLemma}
There exists an Artinian Gorenstein algebra $A$ with even socle degree $d\geq 2$ and linear form $\ell\in A_1$ where $\ell^2\neq 0$ but $\ell^3=0$, such that $$(r,s,t)=(h_{A^{(2)}}(\frac{d}{2}-1) ,h_{A^{(1)}}(\frac{d}{2}-1),h_A(\frac{d}{2}))$$ if and only if 
\begin{itemize}
\item [$(1)$]  $r\in [1,\frac{d}{2}-1]$, $s\in[2r,\frac{d}{2}+r]$ and $t\in[2s-r,\frac{d}{2}+s+1]$, for $d\geq 4$;  or
\item [$(2)$] $r=\frac{d}{2}$, $s=d-1$ and $t\in[\frac{3d}{2}-2,\frac{3d}{2}]$,  for $d\geq 2$.  
\end{itemize}
\end{lemma}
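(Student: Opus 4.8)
The plan is to treat the lemma as a bookkeeping problem about three symmetric O-sequences $h_A=h_{A^{(0)}}$, $h_{A^{(1)}}$, $h_{A^{(2)}}$ of socle degrees $d$, $d-1$, $d-2$ respectively, subject precisely to the constraints of Corollary \ref{cor-rkmatrix}: each is an O-sequence, the difference vectors $h_{A^{(0)}}-(h_{A^{(1)}})_+$ and $h_{A^{(1)}}-(h_{A^{(2)}})_+$ are O-sequences (Lemma \ref{diffO-seq}), and the "additivity" inequality of Lemma \ref{additiveRank} holds between consecutive diagonals. One direction (necessity) is to show that if such $A,\ell$ exist with $\ell^2\ne 0$, $\ell^3=0$, then the maximum values $r=h_{A^{(2)}}(\tfrac d2-1)$, $s=h_{A^{(1)}}(\tfrac d2-1)$, $t=h_A(\tfrac d2)$ satisfy the stated inequalities; the other direction (sufficiency) is to construct, for every admissible triple $(r,s,t)$, an explicit dual generator $F$ as in \eqref{F} realizing it.

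For \textbf{necessity} I would argue as follows. Since $A^{(2)}=S/\ann(\ell^2\circ F)$ is a codimension-$\le 2$ (in fact codimension-$3$ is impossible here: $\ell^2\circ F$ is a polynomial in $Y,Z$ only, as $x^2\circ F = 2G_{d-2}(Y,Z)$ up to the $X$-free part, so $A^{(2)}$ has codimension $\le 2$) Artinian Gorenstein algebra of socle degree $d-2$, its Hilbert function is unimodal and bounded: $r=h_{A^{(2)}}(\tfrac d2-1)\le \tfrac d2-1$ when $A^{(2)}$ has codimension exactly $2$ (a codimension-two Gorenstein algebra is a complete intersection, so $h_{A^{(2)}}=(1,2,3,\dots)$ truncated symmetrically, giving max value $\le\lceil (d-1)/2\rceil=\tfrac d2-1$ since $d-2$ is even... one must be slightly careful with the floor), and $r=\tfrac d2$ forces codimension... — wait, codimension $1$ gives $r=1$, which is the endpoint $r\in[1,\tfrac d2-1]$ of case $(1)$, and the degenerate complete-intersection extreme is exactly case $(2)$. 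Then $h_{A^{(1)}}$ is squeezed between $h_{A^{(2)}}$ and the O-sequence condition: $h_{A^{(1)}}(\tfrac d2-1)\ge h_{A^{(2)}}(\tfrac d2-1)+h_{A^{(2)}}(\tfrac d2-2)\ge 2r-1$... here I would use Lemma \ref{additiveRank} more carefully to get the sharp bound $s\ge 2r$, and Macaulay growth plus the difference-O-sequence condition to get $s\le\tfrac d2+r$. Similarly one derives $2s-r\le t\le\tfrac d2+s+1$ from Lemmas \ref{diffO-seq}, \ref{additiveRank} applied to the $(0,1,2)$ diagonals, together with Macaulay's bound on the growth of $h_A$ and symmetry of $h_A$ about $d/2$. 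The extreme case (2) is where $A^{(2)},A^{(1)}$ are forced to be "as large as possible", i.e. complete intersections, which is why it behaves differently and needs separate treatment.

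For \textbf{sufficiency} the idea is to build $F=X^2G_{d-2}+XG_{d-1}+G_d$ where $G_{d-2},G_{d-1},G_d$ are chosen so that $\ell^2\circ F\sim G_{d-2}$ cuts out an algebra of Hilbert function with max value $r$, $\ell\circ F\sim 2XG_{d-2}+G_{d-1}$ gives max value $s$, and $F$ itself gives max value $t$. Concretely one takes $G_{d-2}$ to be a generic (or explicit monomial-supported) binary form in $Y,Z$ whose annihilator in $\mathsf k[x_2,x_3]$ realizes the prescribed Hilbert function — for the endpoint $r=\tfrac d2-1$ one uses a generic binary form of degree $d-2$, for smaller $r$ one uses a binary form annihilated by a low-degree form — and then chooses $G_{d-1}$, $G_d$ to add exactly the right number of independent new rows/columns to the catalecticants in the middle degrees, interpolating monomials $X^iY^aZ^b$; the coefficient normalizations with $j!(d-j)!$ etc. in \eqref{F} are precisely so that these catalecticant entries come out $0/1$ and the ranks are easy to read off. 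I would verify for the constructed $F$ that $\diag(0),\diag(1),\diag(2)$ of $M_{\ell,A}$ equal $h_A,h_{A^{(1)}},h_{A^{(2)}}$ with the claimed maxima, and that $\ell^3\circ F=0\ne\ell^2\circ F$.

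The \textbf{main obstacle} I expect is the sufficiency direction near the boundary of the admissible region: showing that the "maximal growth" cases can actually be simultaneously achieved — i.e. that choosing $G_{d-2}$ generic does not over-constrain the possible $(s,t)$, and conversely that in the interior one can hit every integer triple. This amounts to checking that the interaction between the three graded pieces (governed by the commutative diagram in Lemma \ref{additiveRank}) has no hidden obstruction beyond conditions $(i)$–$(iii)$ of Corollary \ref{cor-rkmatrix}; equivalently, that those necessary conditions are also sufficient in this three-diagonal codimension-three situation. I would handle this by an explicit case analysis on $r$ (the degenerate $r=\tfrac d2$ of case (2) separately, and $r\le\tfrac d2-1$ of case (1)), in each case writing down $G_{d-2},G_{d-1},G_d$ as sums of monomials with unit coefficients and directly computing the three catalecticant ranks in the middle degrees, which reduces everything to elementary rank computations for $0/1$ matrices with a block/staircase structure.
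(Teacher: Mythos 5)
Your skeleton matches the paper's: necessity from structural constraints on the three diagonals, sufficiency from explicit dual generators $F=X^2G_{d-2}+XG_{d-1}+G_d$ whose middle-degree catalecticant has a block anti-triangular shape from which the three ranks are read off. Routing the necessity bounds through Lemmas \ref{diffO-seq} and \ref{additiveRank} is a legitimate (and arguably cleaner) alternative to the paper, which extracts the same bounds from the block structure of $\Cat_F(\frac d2)$: with $A^{(3)}=0$, Lemma \ref{additiveRank} gives $s\geq h_{A^{(2)}}(\frac d2-1)+h_{A^{(2)}}(\frac d2-2)$ and Lemma \ref{diffO-seq} gives $s\leq \frac d2+h_{A^{(2)}}(\frac d2-2)$, and similarly for $t$. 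But two slips in that half need fixing. First, the lower bound is $s\geq 2r$, not $2r-1$: when $r\leq\frac d2-1$ the symmetric unimodal codimension-$\leq 2$ Hilbert function $h_{A^{(2)}}$ has already plateaued by degree $r-1\leq\frac d2-2$, so $h_{A^{(2)}}(\frac d2-2)=r$. Second, the dichotomy between cases $(1)$ and $(2)$ is not codimension $1$ versus $2$ of $A^{(2)}$ --- case $(1)$ with $r\geq 2$ already has codimension $2$. The split is whether $h_{A^{(2)}}$ is still growing maximally at the middle degree: $r=\frac d2$ forces $h_{A^{(2)}}(\frac d2-2)=\frac d2-1<r$, and the two bounds above then pinch $s$ to the single value $d-1$, while $r\leq \frac d2-1$ gives the ranges of case $(1)$.

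The genuine gap is in sufficiency, which is where essentially all of the paper's work lies and which you leave at the level of ``I would verify.'' For each admissible $(r,s,t)$ one must actually exhibit $G_{d-2},G_{d-1},G_d$ and prove the three rank statements. The paper does this with single monomials --- $G_{d-2}=Y^{r-1}Z^{d-r-1}/((r-1)!(d-r-1)!)$, then monomials for $G_{d-1}$ and $G_d$ indexed by $s-2r$ and $t-2s+r$ --- chosen so that, after normalizing $z$ (resp.\ $y$) to be a strong Lefschetz element for $\K[y,z]/\ann(G_{d-2})$ (resp.\ for $\ann(G_{d-1})$), the relevant corner submatrices have full rank and the three blocks contribute independently to $\rk\Cat_F(\frac d2)$. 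The interaction you flag as the ``main obstacle'' --- that the achievable range of $t$ given $(r,s)$ is governed by the rank of a submatrix $\mathbf A$ of $\Cat_{G_d}(\frac d2)$ whose size $\frac d2+r-s+1$ depends on both $r$ and $s$ --- is exactly the point that must be computed rather than asserted; without those explicit matrices and the check that the chosen monomials realize every intermediate rank (including the boundary subcases $s=2r$, $t=2s-r$ where $G_{d-1}$ or $G_d$ is taken to be $0$, and the separate treatment of $r=\frac d2$), the ``if'' direction of the lemma is not established.
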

\begin{proof}
We prove the statement by analysing the catalecticant matrices in the desired degrees. In each case we first determine all possible ranks for each catalecticant matrix and then for each possible value we provide polynomials $G_{d-2}, G_{d-1}$ and $G_d$ as in (\ref{F}) giving the certain ranks. \par 
The maximum value of the  Hilbert function of $A$ occurs in degree $\frac{d}{2}$ and it is equal to $\rk\Cat_F(\frac{d}{2})$. Pick the following monomial basis for $A_{\frac{d}{2}}$
$$
\B_{\frac{d}{2}} = \{x^{\frac{d}{2}},x^{\frac{d}{2}-1}y,x^{\frac{d}{2}-1}z,x^{\frac{d}{2}-2}y^2,x^{\frac{d}{2}-2}yz,x^{\frac{d}{2}-2}z^2,\dots , y^{\frac{d}{2}},y^{\frac{d}{2}-1}z,\dots ,z^{\frac{d}{2}}\}.
$$
Then the catalecticant matrix of $F$ with respect to $\B_{\frac{d}{2}}$ is equal to 
\begin{equation}
\Cat_F(\frac{d}{2})=\left[\begin{array}{@{}c|c|c@{}}
\mathbf{0}& \mathbf{0}&{\Cat_{G_{d-2}}{(\frac{d}{2}-2)}}
\\\hline
\mathbf{0}&\Cat_{G_{d-2}}{(\frac{d}{2}-1)}&\Cat_{G_{d-1}}{(\frac{d}{2}-1)}\\\hline
\Cat_{G_{d-2}}{(\frac{d}{2})}&\Cat_{G_{d-1}}{(\frac{d}{2})}&\Cat_{G_{d}}{(\frac{d}{2})}
\end{array}
\right].\\
\end{equation}
Which is equal to 
\begin{equation}\label{catmatrix3lines}
 \Cat_F(\frac{d}{2}) = \left[\begin{array}{@{}cccc|cccc|cccc@{}}
    0 & 0 & \cdots & 0 &  0 & 0 & \cdots & 0 & c_{0} & c_{1}&\cdots & c_{\frac{d}{2}} \\
     0 & 0 & \cdots & 0 & 0 & 0 &\cdots & 0 & c_{1} & c_{2} &\cdots & c_{\frac{d}{2}+1} \\
    \vdots & \vdots & \reflectbox{$\ddots$}  &\vdots & \vdots & \vdots & \reflectbox{$\ddots$}  &\vdots  & \vdots  &  \vdots & \reflectbox{$\ddots$} &\vdots\\
 0 & 0 & \cdots & 0 &  0 & 0 &\cdots & 0 &  c_{\frac{d}{2}-2} & c_{\frac{d}{2}-1} &\cdots & c_{d-2} \\\hline
  0 & 0 & \cdots & 0 &  c_{0} & c_{1}&\cdots & c_{\frac{d}{2}-1} & b_{0} & b_{1}&\cdots & b_{\frac{d}{2}} \\
 0 & 0 & \cdots & 0 &   c_{1} & c_{2} &\cdots & c_{\frac{d}{2}}  & b_{1} & b_{2} &\cdots & b_{\frac{d}{2}+1} \\
    \vdots  &\vdots & \reflectbox{$\ddots$}  &  \vdots&   \vdots  &\vdots & \reflectbox{$\ddots$}  &  \vdots& \vdots  &  \vdots & \reflectbox{$\ddots$} &\vdots\\
  0 & 0 & \cdots & 0 &  c_{\frac{d}{2}-1} & c_{\frac{d}{2}} &\cdots & c_{d-2} & b_{\frac{d}{2}-1} & b_{\frac{d}{2}} &\cdots & b_{d-1} \\\hline
     c_{0} & c_{1}&\cdots & c_{\frac{d}{2}-2} & b_{0} & b_{1}&\cdots & b_{\frac{d}{2}-1} &a_{0} & a_{1}&\cdots & a_{\frac{d}{2}} \\
 c_{1} & c_{2} &\cdots & c_{\frac{d}{2}-1}  & b_{1} & b_{2} &\cdots & b_{\frac{d}{2}}& a_{1} & a_{2} &\cdots & a_{\frac{d}{2}+1} \\
     \vdots  &\vdots & \reflectbox{$\ddots$}  &  \vdots& \vdots  &  \vdots & \reflectbox{$\ddots$} &\vdots& \vdots  &  \vdots & \reflectbox{$\ddots$} &\vdots\\
    c_{\frac{d}{2}} & c_{\frac{d}{2}+1} &\cdots & c_{d-2} & b_{\frac{d}{2}} & b_{\frac{d}{2}+1} &\cdots & b_{d-1}& a_{\frac{d}{2}} & a_{\frac{d}{2}+1} &\cdots & a_{d} \\
     \end{array}\right].
\end{equation}
Since any Artinian  algebra of codimension two has the SLP  the rank of the $j$-th Hessian matrices of  polynomials $G_{d-2},G_{d-1}$ and $G_d$ are equal to the ranks of their $j$-th catalecticant matrices. By linear change of coordinates, we may assume that $z$ is the strong Lefschetz element for Artinian  Gorenstein algebra $\mathsf{k}[y,z]/\ann(G_{d-2})$. This implies that the lower right square submatrices of the catalecticant matrices of $G_{d-2}$ in all degrees have maximal rank. Likewise, we may assume that $y$ is the strong Lefschetz element for the Artinian Gorenstein algebra $\mathsf{k}[y,z]/\ann(G_{d-1})$ which means that the upper left square submatrices of the catalecticant matrices of $G_{d-1}$ in different degrees are all full rank. \par 
Observe that $r=h_{A^{(2)}}(\frac{d}{2}-1)\in [1,\frac{d}{2}]$. To show $(1)$ we assume $r=h_{A^{(2)}}(\frac{d}{2}-1)\in [1,\frac{d}{2}-1]$ which implies that $
 h_{A^{(2)}}(\frac{d}{2}-2)= h_{A^{(2)}}(\frac{d}{2}-1)= h_{A^{(2)}}(\frac{d}{2})=r.
$ We assume that the ranks of the lower right submatrices of $\Cat_{G_{d-2}}(\frac{d}{2}-2), \Cat_{G_{d-2}}(\frac{d}{2}-1)$ and $\Cat_{G_{d-2}}(\frac{d}{2})$ are equal to $r$ and setting $c_{d-r-1}=1$ and $c_{i}=0$ for every $i\neq d-r-1 $ provides the desired property. So
\begin{equation}\label{G_(d-2)even(1)}
G_{d-2}= \frac{Y^{r-1}Z^{d-r-1}}{(r-1)!(d-r-1)!}, \quad \text{for all}\quad r\in[1,\frac{d}{2}-1].
\end{equation}
Now in order to obtain possible values for $s=h_{A^{(1)}}(\frac{d}{2}-1)$, we notice that $s\in [2r, 2r+\rk \mathbf{B}]$ where $\mathbf{B}$ is the following matrix 
$$
\mathbf{B}=\left(\begin{array}{@{}ccccccc@{}}
 b_{0} &\cdots & b_{\frac{d}{2}-r} \\
    \vdots  & \reflectbox{$\ddots$} &\vdots\\
b_{\frac{d}{2}-1-r}&\cdots & b_{d-1-2r}      \end{array}\right).
$$
Since the socle degree of $A^{(1)}$  is equal to $d-1$ that is an odd integer, we get that $ h_{A^{(1)}}(\frac{d}{2}-1)= h_{A^{(1)}}(\frac{d}{2}) = s$. For every $s\in [2r, 2r+\rk\mathbf{B}]$, we have $\rk\mathbf{B}=s-2r$. We may assume that the upper left submatrix of $\mathbf{B}$ has rank $s-2r$. Setting  $G_{d-1}=0$ provides that $\rk\mathbf{B}=s-2r=0$. And setting $b_{s-2r-1}=1$ and $b_i=0$ for every $i\neq s-2r-1$ implies that $\rk\mathbf{B}=s-2r\neq 0$.  Equivalently, we set 
\begin{equation}\label{G_(d-1)even(1)}
G_{d-1}=\left\{
                \begin{array}{ll}
                  0 & \text{if $s-2r=0$},\\
                  \frac{Y^{d-s+2r}Z^{s-2r-1}}{(d-s+2r)!(s-2r-1)!} & \text{if $1\leq s-2r\leq \frac{d}{2}-r.$}\\
                \end{array}
              \right.
\end{equation}
This implies that, there exists $A$ such that $h_{A^{(1)}}(\frac{d}{2}-1)=s $ if and only if  $s\in [2r,\frac{d}{2}+r].$\par 
To obtain possible values for $t=h_A(\frac{d}{2})$, first notice that $t\in [2s-r, 2s-r+\rk\mathbf{A}]$, for
$$
\mathbf{A}=\left(\begin{array}{@{}ccccccc@{}}
 a_{2s-4r} &\cdots & a_{\frac{d}{2}-3r+s} \\
    \vdots  & \reflectbox{$\ddots$} &\vdots\\
a_{\frac{d}{2}-3r+s}&\cdots & a_{d-2r}      \end{array}\right).
$$
For every $t\in [2s-r, 2s-r+\rk\mathbf{A}]$, we have that $\rk\mathbf{A}=t-2s+r$. We may assume that the rank of the upper left submatrix of $\mathbf{A}$ is equal to $t-2s+r$. 
For $G_d=0$ we get $\rk\mathbf{A}=t-2s+r=0$.  Setting $a_{t-3r-1}=1$ and $a_i=0$ for every $i\neq t-3r-1$ provides that  $\rk\mathbf{A}=t-2s+r\neq 0$. In other words, we choose $G_d$ as the following
\begin{equation}\label{G_(d)even(1)}
G_{d}=\left\{
                \begin{array}{ll}
                  0 & \text{if $t-2s+r=0$},\\
                  \frac{Y^{d-t+3r+1}Z^{t-3r-1}}{(d-t+3r+1)!(t-3r-1)!} & \text{if $1\leq t-2s+r\leq \frac{d}{2}+r-s+1.$}\\
                \end{array}
              \right.
\end{equation}
So there exists $A$ such that $t=h_A(\frac{d}{2})$ if and only if 
$t\in [2s-r,\frac{d}{2}+s+1]$.

To prove $(2)$ assume $h_{A^{(2)}}(\frac{d}{2}-1)=\frac{d}{2}$. This implies that the Hilbert function of $h_{A^{(2)}}$ has the maximum possible value up to degree $\frac{d}{2}-1$ and since the socle degree of $A^{(2)}$ is even and is equal to $d-2$ we have  $$h_{A^{(2)}}(\frac{d}{2}-2)=h_{A^{(2)}}(\frac{d}{2})=\frac{d}{2}-1.$$ So setting $c_{\frac{d}{2}-1}=1$ and $c_i=0$ for every $i\neq \frac{d}{2}-1$, or equivalently,
\begin{equation}\label{Gd-2maxeven}
G_{d-2} = \frac{Y^{\frac{d}{2}-1}Z^{\frac{d}{2}-1}}{(\frac{d}{2}-1)!(\frac{d}{2}-1)!}
\end{equation}
provides  the desired ranks for the catalecticant matrices $\Cat_{G_{d-2}}(\frac{d}{2}-2)$, $\Cat_{G_{d-2}}(\frac{d}{2}-1)$ and $\Cat_{G_{d-2}}(\frac{d}{2})$. \\ 
We have that $h_{{A^{(1)}}}(\frac{d}{2}-1)=\rk\Cat_{x\circ F}(\frac{d}{2}-1)$, and 
\begin{equation}
\Cat_{x\circ F}(\frac{d}{2}-1) = \left[\begin{array}{@{}c|c@{}}
 \mathbf{0}&{\Cat_{G_{d-2}}{(\frac{d}{2}-2)}}
\\\hline
\Cat_{G_{d-2}}{(\frac{d}{2}-1)}&\Cat_{G_{d-1}}{(\frac{d}{2}-1)}\\
\end{array}
\right].\\
\end{equation}
Since $\rk {\Cat_{G_{d-2}}{(\frac{d}{2}-2)}}=\frac{d}{2}-1$ and $\rk\Cat_{G_{d-2}}{(\frac{d}{2}-1)}=\frac{d}{2}$, the rank of the above matrix is maximum possible and is equal to $d-1$. This means that for every choice of polynomial $G_{d-1}$ in this case we have 
$$
h_{A^{(1)}}(\frac{d}{2}-1)=d-1.
$$
In order to find possbile values for $h_A(\frac{d}{2})$, note that the rank of $\Cat_F(\frac{d}{2})$ is at most equal to $\frac{3d}{2}$. Also
$$\frac{3d}{2}-2=\rk\Cat_{G_{d-2}}(\frac{d}{2}-2)+\rk\Cat_{G_{d-2}}(\frac{d}{2}-1)+\rk\Cat_{G_{d-2}}(\frac{d}{2})\leq \rk\Cat_F(\frac{d}{2})\leq \frac{3d}{2}.$$ 
Note that setting $G_{d-2}$ as  (\ref{Gd-2maxeven}), $G_{d-1}=0$ and  $G_d$ equal to the following 
\begin{equation}\label{Gdmaxeven}
G_{d}=\left\{
                \begin{array}{ll}
                  0 & \text{for $t=\frac{3d}{2}-2$},\\
                  \frac{Y^{d}}{(d)!} & \text{for $t=\frac{3d}{2}-1,$}\\
                  \frac{Y^{d}}{(d)!}+ \frac{Z^{d}}{(d)!}& \text{for $t=\frac{3d}{2}$}.\\
                \end{array}
              \right.
\end{equation}
provides the desired ranks for the catalecticant matrix $\Cat_F(\frac{d}{2})$  in (\ref{catmatrix3lines}). 
\end{proof}
We now prove that the rank matrix of $A$, or equivalently,  Hilbert functions of $A$, $A^{(1)}$ and $A^{(2)}$  are  completely determined by the maximum values of  $h_{A^{(2)}}$, $h_{A^{(1)}}$ and $h_{A}$. We then provide all rank matrices for each possible combination of integers  $(r,s,t)$ listed in Lemma \ref{maxvaluesevenLemma}. 
\begin{theorem}[Even socle degree]\label{3linesHFtheorem-even}
Let $A$ be an Artinian Gorenstein algebra with even socle degree $d\geq 2$ and $\ell\in A_1$ such that $\ell^2\neq 0$ and  $\ell^3=0$. Then Hilbert functions of $A$, $A^{(1)}$ and $A^{(2)}$  are  completely determined by $(r,s,t)=(h_{A^{(2)}}(\frac{d}{2}-1) ,h_{A^{(1)}}(\frac{d}{2}-1),h_A(\frac{d}{2}))$. More precisely, 
\begin{itemize}
\item[$(1)$] if $d\geq 4$, $r\in [1,\frac{d}{2}-1]$, $s\in[2r,\frac{d}{2}+r]$ and $t\in [2s-r,\frac{d}{2}+s+1]$, then  
\begin{equation}\label{HFeven(1)}
h_{A^{(2)}}(i)=\left\{
                \begin{array}{ll}
                  i+1 &  0\leq i\leq r-1,\\
                  r & r\leq i\leq \frac{d}{2}-1,\\
                \end{array}
              \right.\quad
h_{A^{(1)}}(i)=\left\{
                \begin{array}{ll}
                  2i+1 &  0\leq i\leq r-1,\\
                  i+r+1 & r\leq i\leq s-r-1,\\
                  s & s-r\leq i\leq \frac{d}{2}-1.
                \end{array}
              \right.
 \end{equation}
  \begin{itemize}
\item If $t=3r$ then there are two possible Hilbert functions for $A$
\begin{equation}\label{HFevenA(1,1)}
h_{A}(i)=\left\{
                \begin{array}{ll}
                 1 & i=0,\\
                  3i &  1\leq i\leq r-1,\\
                  3r & r\leq i\leq\frac{d}{2},\\
                \end{array}
              \right.
\text{and} \quad 
h_{A}(i)=\left\{
                \begin{array}{ll}
                   1 & i=0,\\
                  3i &  1\leq i\leq r-1,\\
                  3r-1& i=r,\\
                  3r & r+1\leq i\leq \frac{d}{2},\\
                \end{array}
              \right.
 \end{equation}

\item otherwise, i.e., $t>3r$ we have
\begin{equation}\label{HFevenA(1)o.w.}
h_{A}(i)=\left\{
                \begin{array}{ll}
                   1 & i=0,\\
                  3i &  1\leq i\leq r,\\
                 2i+r+1 & r+1\leq i\leq s-r-1,\\
                 2i+r+1& i=s-r, \hspace*{2mm}\text{if}\hspace*{2mm}t>2s-r\hspace*{2mm}\text{and}\hspace*{2mm}s>2r,\\
                  2i+r& i=s-r, \hspace*{2mm}\text{if}\hspace*{2mm}t>2s-r \hspace*{2mm}\text{and}\hspace*{2mm}s=2r,\\
                 i+s+1 &s-r+1\leq i\leq t-s-1,\\
                 t & t-s \leq i\leq \frac{d}{2}.
                \end{array}
              \right.
 \end{equation}
 \end{itemize}
 \item[$(2)$]
If $d\geq 2$, $r=\frac{d}{2}$, $s=d-1$ and $t\in[\frac{3d}{2}-2,\frac{3d}{2}]$, 
then for every  $0\leq i\leq \frac{d}{2}-1$
 \begin{align}\label{HFeven(2)}
  h_{A^{(2)}}(i)=i+1,\quad  h_{A^{(1)}}(i)=2i+1, \hspace*{2mm}\text{and}
 \end{align}
  \begin{align}\label{HFevenA(2.1)}
 h_{A}(i)=\left\{
                \begin{array}{ll}
                 1 & i=0,\\
                  3i &  1\leq i\leq \frac{d}{2}-1,\\
                   t &  i=\frac{d}{2}.\\
                \end{array}
              \right.
  \end{align}
    \end{itemize}
\end{theorem}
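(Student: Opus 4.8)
By Lemma \ref{maxvaluesevenLemma} the triples $(r,s,t)$ that actually occur are exactly those listed, so the task is to show that $(r,s,t)$ determines $h_A$, $h_{A^{(1)}}$ and $h_{A^{(2)}}$ as displayed (with the single binary choice for $h_A$ when $t=3r$); I do not reprove existence, which is Lemma \ref{maxvaluesevenLemma}. The algebra $A^{(2)}=\K[y,z]/\ann(\ell^{2}\circ F)$ has codimension at most two, hence is a complete intersection and, since $\Char\K=0$, has the SLP; thus $h_{A^{(2)}}$ is the unique symmetric O-sequence of socle degree $d-2$ which grows by $1$ in each step up to its peak value and is then constant. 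As $d-2$ is even the peak sits in degree $\tfrac d2-1$ and so equals $r$, and reading off the sequence gives precisely the formula for $h_{A^{(2)}}$ in $(\ref{HFeven(1)})$ when $r\le\tfrac d2-1$ and the maximal sequence $(1,2,\dots,\tfrac d2-1,\tfrac d2,\tfrac d2-1,\dots)$ when $r=\tfrac d2$. Two standing facts will be used repeatedly: first, $h_{A^{(1)}}$ and $h_A$ are Hilbert functions of codimension-$\le 3$ graded Artinian Gorenstein algebras, hence are symmetric and \emph{unimodal} (trivial in codimension $\le 2$, and Stanley's theorem in codimension $3$); second, an O-sequence whose value in degree $1$ is at most $2$ is a \emph{two-variable} O-sequence, that is, it equals $i+1$ on an initial segment and is non-increasing afterwards.

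For $h_{A^{(1)}}$: since $A^{(1)}$ is Gorenstein of socle degree $d-1$, $h_{A^{(1)}}$ is symmetric about $\tfrac{d-1}2$, so $h_{A^{(1)}}(\tfrac d2-1)=h_{A^{(1)}}(\tfrac d2)=s$ and $h_{A^{(1)}}$ is determined on $[0,\tfrac d2-1]$. Put $e:=\diag(1,M_{\ell,A})-\bigl(\diag(2,M_{\ell,A})\bigr)_{+}$, so $e(i)=h_{A^{(1)}}(i)-h_{A^{(2)}}(i-1)$; by Lemma \ref{diffO-seq} this is an O-sequence, with $e(0)=1$ and $e(1)=h_{A^{(1)}}(1)-1\le 2$ because $\mathrm{codim}\,A^{(1)}\le 3$, hence a two-variable O-sequence. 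In case $(1)$ one has $r\le\tfrac d2-1$, so $h_{A^{(2)}}(\tfrac d2-2)=r$ and therefore $e(\tfrac d2-1)=s-r$. If $e$ strictly decreased somewhere on $[0,\tfrac d2-1]$, then — using that the first difference of $h_{A^{(2)}}$ on that range is $1$ up to index $r-2$ and $0$ afterwards — $h_{A^{(1)}}$ would strictly decrease at the same place, which by unimodality of $h_{A^{(1)}}$ is impossible except possibly for a decrease of $e$ at some index $\le r$; and a short computation shows that case would force $e(r-1)=s-r+1$, exceeding the maximal-growth bound $e(r-1)\le r$ since $s\ge 2r$ (the inequality supplied by Lemma \ref{maxvaluesevenLemma}). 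Hence $e$ is non-decreasing on $[0,\tfrac d2-1]$, so there $e(i)=\min(i+1,\,s-r)$, giving $h_{A^{(1)}}(i)=\min(i+1,s-r)+\min(i,r)$, which unwinds to the right-hand column of $(\ref{HFeven(1)})$; symmetry supplies the remaining degrees. In case $(2)$, $r=\tfrac d2$ gives instead $e(\tfrac d2-1)=\tfrac d2=(\tfrac d2-1)+1$, forcing $e(i)=i+1$ and $h_{A^{(1)}}(i)=2i+1$ on $[0,\tfrac d2-1]$, i.e. $(\ref{HFeven(2)})$.

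For $h_A$ one runs the same scheme one diagonal higher. It is symmetric about $\tfrac d2$, with $h_A(\tfrac d2)=t$, and is determined on $[0,\tfrac d2]$. Put $f:=\diag(0,M_{\ell,A})-\bigl(\diag(1,M_{\ell,A})\bigr)_{+}$, so $f(i)=h_A(i)-h_{A^{(1)}}(i-1)$; by Lemma \ref{diffO-seq} it is an O-sequence with $f(0)=1$, $f(1)\le 2$, hence again two-variable. The function $h_{A^{(1)}}$ found above has, on its first half, two ``corners'': its first difference drops from $2$ to $1$ around index $r$ and from $1$ to $0$ around index $s-r$, and at such a corner the two-variable O-sequence $f$ is permitted — but not obliged — to decrease by $1$. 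Unimodality of $h_A$ forbids a decrease of $f$ at a corner whenever the induced change of $h_A$ would be a strict decrease, and which of the two potential decreases of $f$ is actually forced is then pinned down by the endpoint value $f(\tfrac d2)=t-s$ together with $t\ge 2s-r$ — this is exactly what produces the dichotomy at $t=3r$ in $(\ref{HFevenA(1,1)})$ and the distinction between $s=2r$ and $s>2r$ in $(\ref{HFevenA(1)o.w.})$. Assembling $h_A(i)=f(i)+h_{A^{(1)}}(i-1)$ on $[0,\tfrac d2]$ and extending by symmetry gives $(\ref{HFevenA(1,1)})$ and $(\ref{HFevenA(1)o.w.})$; in case $(2)$, $h_{A^{(1)}}$ is maximal and $f=\min(i+1,t-s)$, giving $(\ref{HFevenA(2.1)})$.

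The steps for $h_{A^{(2)}}$ and $h_{A^{(1)}}$ are clean; the real work is the last one, where one must track simultaneously the (up to) two possible decreases of the two-variable O-sequence $f$ at the two corners of $h_{A^{(1)}}$, the endpoint constraint $f(\tfrac d2)=t-s$, and the several degenerate ranges ($s=2r$, $t=2s-r$, $t=3r$, and the borderline $r=\tfrac d2$), checking in each that precisely the listed Hilbert functions survive all of Corollary \ref{cor-rkmatrix} and, when $t=3r$, that both alternatives genuinely occur. The most convenient way to certify occurrence (and to double-check every shape) is to compute the Hilbert functions directly from the monomial forms $G_{d-2},G_{d-1},G_d$ exhibited in the proof of Lemma \ref{maxvaluesevenLemma}, using the block decomposition $(\ref{catmatrix3lines})$ of the catalecticant matrix whose only nonzero blocks are catalecticants of binary forms — with ranks governed, again, by the SLP in codimension $\le 2$ and, after the Lefschetz normalization of the $Y,Z$-coordinates, by the known positions of their nonvanishing minors. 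Keeping the parities ($d$ even, $d-1$ odd, $d-2$ even) and all the interval endpoints consistent is the bulk of the remaining bookkeeping.
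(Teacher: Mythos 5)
Your overall strategy coincides with the paper's: determine $h_{A^{(2)}}$ from the codimension-$\le 2$ Gorenstein structure, then recover $h_{A^{(1)}}$ and $h_A$ from the successive difference vectors of Lemma \ref{diffO-seq}, using that these are O-sequences with value at most $2$ in degree one (hence grow by exactly one up to their maximum and are non-increasing afterwards), combined with symmetry, the prescribed middle values $r,s,t$, and unimodality. Your arguments for $h_{A^{(2)}}$ and $h_{A^{(1)}}$ are correct and essentially identical to the paper's. For $h_A$ in case $(1)$ you describe the right mechanism — including why the only ambiguity is at $i=s-r$ when $s=2r$ and $t-s=r$, which is the source of the $t=3r$ dichotomy — but you leave the actual case analysis as ``bookkeeping''; that analysis is where most of the content of (\ref{HFevenA(1,1)}) and (\ref{HFevenA(1)o.w.}) lives, and the paper does carry it out in full.

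There is, however, a genuine error in your case $(2)$. You assert $f(i)=\min(i+1,t-s)$ for $f=h_A-(h_{A^{(1)}})_+$. When $t=\tfrac{3d}{2}-2$ one has $t-s=\tfrac d2-1$, so your formula gives $f(\tfrac d2-1)=\tfrac d2-1$ and hence $h_A(\tfrac d2-1)=(\tfrac d2-1)+h_{A^{(1)}}(\tfrac d2-2)=(\tfrac d2-1)+(d-3)=\tfrac{3d}{2}-4$, contradicting the claimed value $3(\tfrac d2-1)=\tfrac{3d}{2}-3$. The two-variable O-sequence constraints together with unimodality and the endpoint $f(\tfrac d2)=\tfrac d2-1$ determine $f$ only on $[0,\tfrac d2-2]$ and leave both $f(\tfrac d2-1)=\tfrac d2-1$ and $f(\tfrac d2-1)=\tfrac d2$ open: for $d=6$ both candidates $(1,3,5,7,5,3,1)$ and $(1,3,6,7,6,3,1)$ are symmetric, unimodal, and have O-sequence difference vectors against $h_{A^{(1)}}=(1,3,5,5,3,1)$. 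The paper excludes the smaller value by a separate rank argument: the block anti-triangular shape of $\Cat_F(\tfrac d2-1)$ forces $h_A(\tfrac d2-1)\ge h_{A^{(2)}}(\tfrac d2-3)+h_{A^{(2)}}(\tfrac d2-2)+h_{A^{(2)}}(\tfrac d2-1)=\tfrac{3d}{2}-3$ (with a separate remark for $d=4$). Equivalently one may invoke Lemma \ref{additiveRank}, which gives $h_A(\tfrac d2-1)\ge h_{A^{(1)}}(\tfrac d2-1)+h_{A^{(1)}}(\tfrac d2-2)-h_{A^{(2)}}(\tfrac d2-2)=\tfrac{3d}{2}-3$ — a condition your candidate violates even though you list Corollary \ref{cor-rkmatrix} among the checks to perform. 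One of these extra inputs is needed; the $\min$ formula as stated is false in this subcase.
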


\begin{proof}
We first show $(1)$. Since the Hilbert function of Artinian    Gorenstein algebras are symmetric it is enough to determine it up to the middle degree.
We have that 
$$
A^{(2)} = S/\ann(\ell^2\circ F) = S/\ann(G_{d-2}).
$$
So $A^{(2)}$ is an Artinian    Gorenstein algebra with codimension at most two and the maximum value of $h_{A^{(2)}}$ is equal to $r$. The Hilbert function of $A^{(2)}$ increases by exactly one until it reaches $r$ and it stays $r$ up to the middle degree, $\frac{d}{2}-1$. So we get $h_{A^{(2)}}$ as we claimed.\par 
The assumption on $r$ implies that $h_{A^{(2)}}(\frac{d}{2}-2)=h_{A^{(2)}}(\frac{d}{2}-1)=r$. So 
$$
(h_{A^{(1)}}-(h_{A^{(2)}})_+)(\frac{d}{2}-1) = h_{A^{(1)}}(\frac{d}{2}-1)-h_{A^{(2)}}(\frac{d}{2}-2) = s-r.
$$
Since $(h_{A^{(1)}}-(h_{A^{(2)}})_+)(1)\leq 2$, Lemma \ref{diffO-seq} implies that for every $0\leq i\leq s-r-1,$
$$(h_{A^{(1)}}-(h_{A^{(2)}})_+)(i)=i+1.$$
So since $0\leq r-1\leq s-r-1$, for every $0\leq i\leq r-1$ we have that 
$$
h_{A^{(1)}}(i) = i+1+h_{A^{(2)}}(i-1)= i+1+i = 2i+1.
$$
If $r-1< s-r-1$, then for $r\leq i\leq s-r-1$ we have 
$$
h_{A^{(1)}}(i) = i+1+h_{A^{(2)}}(i-1)= i+1+r.
$$
We have that $r\leq s-r$, so  $h_{A^{(2)}}(i)=r$ for every $s-r-1\leq i\leq \frac{d}{2}-1$ which implies that  $h_{A^{(1)}}(i) = s$, for every $s-r\leq i\leq \frac{d}{2}-1$.

We now determine the Hilbert function of $A$. By assumption we have $(h_A-(h_{A^{(1)}})_+)({\frac{d}{2}})= h_{A}(\frac{d}{2})-h_{A^{(1)}}(\frac{d}{2}-1)=t-s$. On the other hand, $(h_{A}-(h_{A^{(1)}})_+)(1)\leq 2$ and by Lemma \ref{diffO-seq} we conclude that $h_{A}-(h_{A^{(1)}})_+$ is the Hilbert function of some algebra with codimension at most two. So for every $0\leq i\leq t-s-1$
$$
(h_A-(h_{A^{(1)}})_+)({i}) = i+1.
$$
By assumption we have $0\leq r-1\leq s-r-1\leq t-s-1$, so for every $1\leq i\leq r-1$
$$
h_A(i) =i+1+h_{A^{(1)}}(i-1)= i+1+2(i-1)+1=3i.
$$
\begin{itemize}
\item Suppose that $r=t-s$, then $s=2r$ and $t=3r$. Since we have $r\leq \frac{d}{2}-1$ and the Hilbert function of an  algebra with codimension two is unimodal, we get
$$
\left(h_A-(h_{A^{(1)}})_+\right)({r})\geq \left(h_A-(h_{A^{(1)}})_+\right)({r-1})
$$ and thus 
$$h_A(r)\geq r+h_{A^{(1)}}(r-1) = r+2(r-1)+1=3r-1.
$$
Thus we have two possible values for $h_A(r)$, that is either equal to $3r-1$ or $3r$. Clearly, $h_A(i)=3r$ for every $r+1\leq i\leq \frac{d}{2}$. 
\item Now suppose that $r<t-s$. Then $$h_A(r) = r+1+h_{A^{(1)}}(r-1)=r+1+2(r-1)+1=3r.$$
If $r<s-r-1$, then for every $r+1\leq i\leq s-r-1$ we get 
$$
h_A(i)=i+1+h_{A^{(1)}}(i-1) = i+1+r+i=2i+r+1.
$$
If $s-r-1<t-s-1$, then 
\begin{equation*}
h_A(s-r) =s-r+1+h_{A^{(1)}}(s-r-1) = \left\{
                \begin{array}{ll}
                  s-r+1+s &  \text{if}\hspace*{2mm} s>2r,\\
                  s-r+1+s-1 & \text{if}\hspace*{2mm} s=2r.\\
                \end{array}
              \right.
\end{equation*}
If $s-r<t-s-1$, then for every $s-r+1\leq i\leq t-s-1$ we get 
$$
h_A(i)=i+1+h_{A^{(1)}}(i-1)=i+1+s.
$$
Since the Hilbert function of an Artinian algebra with codimension two is unimodal and $t-s\leq \frac{d}{2}+1$ we have that
$$
\left(h_A-(h_{A^{(1)}})_+\right)({t-s})\geq \left(h_A-(h_{A^{(1)}})_+\right)({t-s-1})=t-s.
$$
Therefore, 
\begin{equation}\label{h(t-s)}
h_A(t-s)\geq t-s+h_{A^{(1)}}(t-s-1).
\end{equation}
If $s-r-1<t-s-1$, then $h_A(t-s) \geq t-s+s = t$. Therefore, for every $t-s\leq i\leq \frac{d}{2}$ we have that $h_A(i)=t$.

\noindent If $s-r=t-s$, assuming  $r= s-r$ implies that $s=2r$ and $t=3r$ which contradicts the assumption that $r<t-s$. So we have $r\leq s-r-1$. Using (\ref{h(t-s)}) we get that
$$
h_A(t-s)\geq t-s+h_{A^{(1)}}(t-s-1)= t-s+s=t .
$$ We conclude that $h_A(i)=t$, for every $t-s\leq i\leq \frac{d}{2}$. 
\end{itemize}
We now prove $(2)$. Notice that 
$$\frac{d}{2}-1=\frac{3d}{2}-2 -(d-1)\leq \left(h_A-(h_{A^{(1)}})_+\right)({\frac{d}{2}})\leq \frac{3d}{2} -(d-1)=\frac{d}{2}+1.$$
If $ \frac{d}{2}\leq \left(h_A-(h_{A^{(1)}})_+\right)({\frac{d}{2}})\leq \frac{d}{2}+1$, then for every $1\leq i\leq \frac{d}{2}-1$ we have that $\left(h_A-(h_{A^{(1)}})_+\right)(i)=i+1$ which implies that
$$
h_A(i)=i+1+2(i-1)+1=3i.
$$
If $\left(h_A-(h_{A^{(1)}})_+\right)({\frac{d}{2}})= \frac{d}{2}-1$, then for $1\leq i\leq \frac{d}{2}-2$ we have $h_A(i)=i+1+2(i-1)+1=3i$. On the other hand, for every $d\geq 6$ we have that 
\begin{equation*}
\Cat_F(\frac{d}{2}-1)=\left[\begin{array}{@{}c|c|c@{}}
\mathbf{0}& \mathbf{0}&{\Cat_{G_{d-2}}{(\frac{d}{2}-3)}}
\\\hline
\mathbf{0}&\Cat_{G_{d-2}}{(\frac{d}{2}-2)}&\Cat_{G_{d-1}}{(\frac{d}{2}-2)}\\\hline
\Cat_{G_{d-2}}{(\frac{d}{2}-1)}&\Cat_{G_{d-1}}{(\frac{d}{2}-1)}&\Cat_{G_{d}}{(\frac{d}{2}-1)}
\end{array}
\right].\\
\end{equation*}
Which implies that 
 $$
 \frac{3d}{2}-3= h_{A^{(2)}}(\frac{d}{2}-3)+h_{A^{(2)}}(\frac{d}{2}-2)+h_{A^{(2)}}(\frac{d}{2}-1)\leq h_A(\frac{d}{2}-1), 
 $$
 and since  $\Cat_F(\frac{d}{2}-1)$ is a square matrix of size $\frac{3d}{2}-3$ we get  $h_A(\frac{d}{2}-1)=\frac{3d}{2}-3$. For $d=4$ similar argument implies that 
  $$
3= h_{A^{(2)}}(0)+h_{A^{(2)}}(1)\leq h_A(1).
 $$
For $d=2$ there is noting to show.
\end{proof}
Now we state and prove the analogues statements to Lemma \ref{maxvaluesevenLemma} and Theorem \ref{3linesHFtheorem-even} for Artinian Gorenstein  algebras with odd socle degrees.

\begin{lemma}[Odd socle degree]\label{maxvaluesoddLemma}
There exists an Artinian Gorenstein algebra $A$ with odd socle degree $d\geq 3$ and linear form $\ell\in A_1$ where $\ell^2\neq 0$ and $\ell^3=0$, such that 
$$
(r,s,t)=\left( h_{A^{(2)}}(\frac{d-1}{2}),  h_{A^{(1)}}(\frac{d-1}{2}),  h_{A}(\frac{d-1}{2})\right)
$$
if and only if 
\begin{itemize}
\item[$(1)$] $r\in [1,\frac{d-1}{2}-1], s\in[2r,\frac{d-1}{2}+r]$ and $t\in[2s-r,\frac{d-1}{2}+s+1]$, for $d\geq 5$; or 
\item[$(2)$] $r\in [1,\frac{d-1}{2}-1], s=\frac{d-1}{2}+r+1$ and $t=d+r$, for $d\geq 5$; or 
\item[$(3)$] $r=\frac{d-1}{2}, s\in [d-1,d]$ and $t\in [\frac{d-1}{2}+s-1,3\frac{d-1}{2}]$, for $d\geq 3$.
\end{itemize}
\end{lemma}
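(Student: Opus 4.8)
The plan is to follow the proof of Lemma \ref{maxvaluesevenLemma} closely, replacing the middle degree $\frac d2$ by $\frac{d-1}2$ and keeping track of the parities: now $A$ has odd socle degree $d$, $A^{(1)}$ has \emph{even} socle degree $d-1$, and $A^{(2)}$ has odd socle degree $d-2$. As before write $F=X^2G_{d-2}+XG_{d-1}+G_d$ with $G_{d-2},G_{d-1},G_d\in\mathsf k[Y,Z]$, so that $A^{(2)}=S/\ann(G_{d-2})$, and express $\Cat_{G_{d-2}}$, $\Cat_{\ell\circ F}(\frac{d-1}2)$ and $\Cat_F(\frac{d-1}2)$ as the block (dashed) matrices assembled from catalecticants of $G_{d-2}$, $G_{d-1}$, $G_d$ in consecutive degrees, exactly as in (\ref{catmatrix3lines}). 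One new feature relative to the even case is that for odd $d$ the outer matrix $\Cat_F(\frac{d-1}2)$ is no longer square, which changes some of the size bounds used below. Since every codimension-two Artinian Gorenstein algebra has the SLP, after a linear change of coordinates I may assume that $z$ is a strong Lefschetz element of $\mathsf k[y,z]/\ann(G_{d-2})$ and $y$ is one of $\mathsf k[y,z]/\ann(G_{d-1})$, so the lower-right submatrices of the catalecticants of $G_{d-2}$ and the upper-left submatrices of those of $G_{d-1}$ have maximal rank in every degree.

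Next I would pin down the admissible ranges one invariant at a time, as in the even proof. The integer $r=h_{A^{(2)}}(\frac{d-1}2)=\rk\Cat_{G_{d-2}}(\frac{d-1}2)$ is the Hilbert function of a codimension $\le 2$ Gorenstein algebra of socle degree $d-2$ evaluated at (one of) its middle degree(s), hence $r\in[1,\frac{d-1}2]$, with $r=\frac{d-1}2$ exactly when $h_{A^{(2)}}$ is as large as possible; splitting on $r<\frac{d-1}2$ versus $r=\frac{d-1}2$ separates cases $(1)$--$(2)$ from case $(3)$. For fixed $r$ I would read off the possible ranks of the ``$b$-block'' inside $\Cat_{\ell\circ F}(\frac{d-1}2)$, giving the range of $s=h_{A^{(1)}}(\frac{d-1}2)$; because $A^{(1)}$ has even socle degree there is one extra admissible value $s=\frac{d-1}2+r+1$ at the very top of the range, and this is precisely case $(2)$. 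Finally, for fixed $(r,s)$ I would read off the possible ranks of the ``$a$-block'' inside $\Cat_F(\frac{d-1}2)$ to obtain the range of $t=h_A(\frac{d-1}2)$; in the exceptional case $(2)$ the near-maximality of the $b$-block forces the $a$-block to have a prescribed rank, which pins $t=d+r$. For the forward (``if'') direction I would, in every case and for every admissible triple, exhibit explicit monomials for $G_{d-2}$, $G_{d-1}$, $G_d$ --- a single monomial $Y^{r-1}Z^{d-r-1}/((r-1)!(d-r-1)!)$ for $G_{d-2}$ and the analogues of (\ref{G_(d-1)even(1)}) and (\ref{G_(d)even(1)}) (allowing a two-term choice at the extreme value of $t$ in case $(3)$) --- chosen so that all catalecticant entries are $0$ or $1$ and the claimed ranks are attained; the converse (``only if'') direction follows from subadditivity of rank across these block decompositions together with the size and symmetry constraints on $\Cat_F(\frac{d-1}2)$ and on $h_A$, $h_{A^{(1)}}$, $h_{A^{(2)}}$ (Lemma \ref{diffO-seq} and Corollary \ref{cor-rkmatrix}).

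The main obstacle is the combinatorial bookkeeping that produces the extra case $(2)$: one must track how the maximal-rank submatrix of $\Cat_{G_{d-1}}$ (upper-left, from the choice of $y$ as Lefschetz element) overlaps, inside the non-square matrix $\Cat_F(\frac{d-1}2)$, with the maximal-rank submatrices of $\Cat_{G_{d-2}}$ (lower-right) and with the block of $G_d$, and show that when $s$ attains its top value $\frac{d-1}2+r+1$ these overlaps force exactly one value of $t$, whereas for every smaller $s$ all of $t\in[2s-r,\frac{d-1}2+s+1]$ are realized. Aligning the index ranges of the three block families with the two distinct ``middle degrees'' (that of $A$ and $A^{(2)}$ on one hand, that of $A^{(1)}$ on the other) is the delicate point; once this is done, the explicit monomial constructions and rank counts go through as in the even case. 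Finally, for the smallest socle degrees --- in particular $d=3$, where only case $(3)$ with $r=\frac{d-1}2=1$ occurs --- the statement reduces to a direct check.
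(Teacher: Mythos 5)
Your proposal follows the paper's own proof essentially step for step: the same block decomposition of $\Cat_F(\frac{d-1}{2})$ into catalecticants of $G_{d-2},G_{d-1},G_d$, the same normalization via the SLP of codimension-two Gorenstein algebras, the same case split on $r<\frac{d-1}{2}$ versus $r=\frac{d-1}{2}$, and the same identification of the extra case $(2)$ as arising because the $b$-block is now square (one more admissible value $s=\frac{d-1}{2}+r+1$, whose maximality forces $t=d+r$ independently of $G_d$), with existence in every case witnessed by the analogous explicit monomial choices. This is the paper's argument, so no further comparison is needed.
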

\begin{proof}
The maximum value of the Hilbert function of $A$ occurs in degree $\frac{d-1}{2}$ and it is equal to the rank of the following catalecticant matrix
 \begin{equation}
\Cat_F(\frac{d-1}{2})=\left[\begin{array}{@{}c|c|c@{}}
\mathbf{0}& \mathbf{0}&{\Cat_{G_{d-2}}{(\frac{d-1}{2}-2)}}
\\\hline
\mathbf{0}&\Cat_{G_{d-2}}{(\frac{d-1}{2}-1)}&\Cat_{G_{d-1}}{(\frac{d-1}{2}-1)}\\\hline
\Cat_{G_{d-2}}{(\frac{d-1}{2})}&\Cat_{G_{d-1}}{(\frac{d-1}{2})}&\Cat_{G_{d}}{(\frac{d-1}{2})}
\end{array}
\right]\\
\end{equation}
which is equal to
\begin{Small}
\begin{equation}\label{catmatrix3linesodd}
 \Cat_F(\frac{d-1}{2}) = \left[\begin{array}{@{}cccc|cccc|cccc@{}}
    0 & 0 & \cdots & 0 &  0 & 0 & \cdots & 0 & c_{0} & c_{1}&\cdots & c_{\frac{d+1}{2}} \\
     0 & 0 & \cdots & 0 & 0 & 0 &\cdots & 0 & c_{1} & c_{2} &\cdots & c_{\frac{d+1}{2}+1} \\
    \vdots & \vdots & \reflectbox{$\ddots$}  &\vdots & \vdots & \vdots & \reflectbox{$\ddots$}  &\vdots  & \vdots  &  \vdots & \reflectbox{$\ddots$} &\vdots\\
 0 & 0 & \cdots & 0 &  0 & 0 &\cdots & 0 &  c_{\frac{d-1}{2}-2} & c_{\frac{d-1}{2}-1} &\cdots & c_{d-2} \\\hline
  0 & 0 & \cdots & 0 &  c_{0} & c_{1}&\cdots & c_{\frac{d-1}{2}} & b_{0} & b_{1}&\cdots & b_{\frac{d+1}{2}} \\
 0 & 0 & \cdots & 0 &   c_{1} & c_{2} &\cdots & c_{\frac{d-1}{2}+1}  & b_{1} & b_{2} &\cdots & b_{\frac{d+1}{2}+1} \\
    \vdots  &\vdots & \reflectbox{$\ddots$}  &  \vdots&   \vdots  &\vdots & \reflectbox{$\ddots$}  &  \vdots& \vdots  &  \vdots & \reflectbox{$\ddots$} &\vdots\\
  0 & 0 & \cdots & 0 &  c_{\frac{d-1}{2}-1} & c_{\frac{d-1}{2}} &\cdots & c_{d-2} & b_{\frac{d-1}{2}-1} & b_{\frac{d-1}{2}} &\cdots & b_{d-1} \\\hline
     c_{0} & c_{1}&\cdots & c_{\frac{d-1}{2}-1} & b_{0} & b_{1}&\cdots & b_{\frac{d-1}{2}} &a_{0} & a_{1}&\cdots & a_{\frac{d+1}{2}} \\
 c_{1} & c_{2} &\cdots & c_{\frac{d-1}{2}}  & b_{1} & b_{2} &\cdots & b_{\frac{d-1}{2}+1}& a_{1} & a_{2} &\cdots & a_{\frac{d+1}{2}+1} \\
     \vdots  &\vdots & \reflectbox{$\ddots$}  &  \vdots& \vdots  &  \vdots & \reflectbox{$\ddots$} &\vdots& \vdots  &  \vdots & \reflectbox{$\ddots$} &\vdots\\
    c_{\frac{d-1}{2}} & c_{\frac{d-1}{2}+1} &\cdots & c_{d-2} & b_{\frac{d-1}{2}} & b_{\frac{d-1}{2}+1} &\cdots & b_{d-1}& a_{\frac{d-1}{2}} & a_{\frac{d-1}{2}+1} &\cdots & a_{d} \\
     \end{array}\right].
\end{equation}
\end{Small}
We note that $r=h_{A^{(2)}}(\frac{d-1}{2})\in [1,\frac{d-1}{2}]$. First assume that $r\in [1,\frac{d-1}{2}-1]$ and note that  the socle degree of $A^{(2)}$ is odd, then we have that $
 h_{A^{(2)}}(\frac{d-1}{2}-2)= h_{A^{(2)}}(\frac{d-1}{2}-1)= \linebreak h_{A^{(2)}}(\frac{d-1}{2})=r.
$
 We may assume that the ranks of the lower right submatrices of \linebreak$\Cat_{G_{d-2}}(\frac{d-1}{2}-2), \Cat_{G_{d-2}}(\frac{d-1}{2}-1)$ and $\Cat_{G_{d-2}}(\frac{d-1}{2})$ are equal to $r$. Setting  $c_{d-r-1}=1$ and $c_i=0$ for every $i\neq d-r-1$, or equivalently setting $G_{d-2}$ as the following  provides the desired property
\begin{equation}\label{Gd-2}
G_{d-2}= \frac{Y^{r-1}Z^{d-r-1}}{(r-1)!(d-r-1)!}, \quad \text{for each}\quad r\in[1,\frac{d-1}{2}-1].
\end{equation}
The Hilbert function of $A^{(1)}$ in degree $\frac{d-1}{2}$ is equal to $2r+\rk\mathbf{B}$ where 
\begin{equation}\label{matrixBodd}
\mathbf{B}=\left(\begin{array}{@{}ccccccc@{}}
 b_{0} &\cdots & b_{\frac{d-1}{2}-r} \\
    \vdots  & \reflectbox{$\ddots$} &\vdots\\
b_{\frac{d-1}{2}-r}&\cdots & b_{d-1-2r}      \end{array}\right).
\end{equation}
So $s=h_{A^{(1)}}(\frac{d-1}{2})\in [2r, \frac{d-1}{2}+r+1]$. Suppose that  $s\in [2r, \frac{d-1}{2}+r]$. This implies that $ h_{A^{(1)}}(\frac{d-1}{2}-1)= h_{A^{(1)}}(\frac{d-1}{2}) = s$. To prove $(1)$, we use the same argument that we used to prove Lemma \ref{maxvaluesevenLemma} part $(1)$. Therefore, the following choice of $G_{d-1}$ and $G_d$ completes the proof of part $(1)$.
\begin{equation*}
G_{d-1}=\left\{
                \begin{array}{ll}
                  0 & \text{if $s-2r=0$},\\
                  \frac{Y^{d-s+2r}Z^{s-2r-1}}{(d-s+2r)!(s-2r-1)!} & \text{if $1\leq s-2r\leq \frac{d-1}{2}-r,$}\\
                \end{array}
              \right.
\end{equation*}
and 
\begin{equation*}
G_{d}=\left\{
                \begin{array}{ll}
                  0 & \text{if $t-2s+r=0$},\\
                  \frac{Y^{d-t+3r+1}Z^{t-3r-1}}{(d-t+3r+1)!(t-3r-1)!} & \text{if $1\leq t-2s+r\leq \frac{d-1}{2}+r-s+1.$}\\
                \end{array}
              \right.
\end{equation*}
Now assume that $s=h_{A^{(1)}}(\frac{d-1}{2})=\frac{d-1}{2}+r+1$, which is the maximum possible for $r\in [1,\frac{d-1}{2}-1]$.  The following submatrices  of $\Cat_{G_{d-1}}(\frac{d-1}{2})$ having maximal rank, that is equal to $\frac{d-1}{2}-r+1$, implies that $\rk\Cat_{x\circ F}(\frac{d-1}{2})=\frac{d-1}{2}+r+1$.
\begin{equation*}
\mathbf{B}=\left[\begin{array}{@{}ccccccc@{}}
 b_{0} &\cdots & b_{\frac{d-1}{2}-r} \\
    \vdots  & \reflectbox{$\ddots$} &\vdots\\
b_{\frac{d-1}{2}-r}&\cdots & b_{d-1-2r}      \end{array}\right].
\end{equation*}
This forces the following submatrix of $\Cat_{G_{d-1}}(\frac{d-1}{2}-1)$ to have maximal rank, that is equal to  $\frac{d-1}{2}-r$. 
\begin{equation*}
\mathbf{B^\prime}=\left[\begin{array}{@{}ccccccc@{}}
 b_{0} &\cdots & b_{\frac{d+1}{2}-r} \\
    \vdots  & \reflectbox{$\ddots$} &\vdots\\
b_{\frac{d-1}{2}-1-r}&\cdots & b_{d-1-2r}      \end{array}\right].
\end{equation*}
Setting $b_{\frac{d-1}{2}+r}=1$ and $b_{i}=0$ for every $i\neq \frac{d-1}{2}+r$, or equivalently, 
$$
G_{d-1} = \frac{Y^{\frac{d-1}{2}-r}Z^{\frac{d-1}{2}+r}}{(\frac{d-1}{2}-r)!(\frac{d-1}{2}+r)!}
$$
provides that 
$$h_{A^{(1)}}(\frac{d-1}{2})=\frac{d-1}{2}+r+1, \hspace*{2mm}\text{and} \hspace*{2mm} h_{A^{(1)}}(\frac{d-1}{2}-1)=\frac{d-1}{2}+r.$$
Since $\mathbf{B}$ and $\mathbf{B^\prime}$ both have maximal ranks for every choice of $G_d$, we conclude 
$$
h_A(\frac{d-1}{2})=\rk\Cat_{F}(\frac{d-1}{2}) = 3r+\rk\mathbf{B}+\rk\mathbf{B^\prime} = d+r.
$$
Now we assume that $r=\frac{d-1}{2}$ as in $(3)$. Since $d$ is an odd integer $h_{A^{(2)}}(\frac{d-1}{2}-1)=h_{A^{(2)}}(\frac{d-1}{2})=\frac{d-1}{2}$ and  $h_{A^{(2)}}(\frac{d-1}{2}-2) = \frac{d-1}{2}-1$. Setting $c_{\frac{d-1}{2}-1}=1$ and $c_i=0$ for every $i\neq \frac{d-1}{2}-1$, that is
\begin{equation}\label{random}
G_{d-2} = \frac{Y^{\frac{d-1}{2}}Z^{\frac{d-1}{2}-1}}{(\frac{d-1}{2})!(\frac{d-1}{2}-1)!},
\end{equation}
implies that $h_{A^{(2)}}(\frac{d-1}{2})=\frac{d-1}{2}$.
In order to find possible values for $h_{A^{(1)}}(\frac{d-1}{2})$, note that
\begin{equation*}
h_{A^{(1)}}(\frac{d-1}{2}) = \rk\left[\begin{array}{@{}c|c@{}}
 \mathbf{0}&{\Cat_{G_{d-2}}{(\frac{d-1}{2}-1)}}
\\\hline
\Cat_{G_{d-2}}{(\frac{d-1}{2})}&\Cat_{G_{d-1}}{(\frac{d-1}{2})}\\
\end{array}
\right],\\
\end{equation*}
is a square matrix of size $d$. On the other hand $$d-1=\rk\Cat_{G_{d-2}}(\frac{d-1}{2}-1)+\rk\Cat_{G_{d-2}}(\frac{d-1}{2}-1)\leq h_{A^{(1)}}(\frac{d-1}{2}).$$ 
For the polynomial $G_{d-2}$ as in (\ref{random}) we get that the last column of the above matrix is zero. So setting $G_{d-1}=0$ gives $h_{A^{(1)}}(\frac{d-1}{2})=d-1$ and setting $G_{d-1}=\frac{Z^{d-1}}{(d-1)!}$ gives that $h_{A^{(1)}}(\frac{d-1}{2})=d$.
To find possible values for $h_A(\frac{d-1}{2})$ we note that the number of rows in the catalecticant matrix (\ref{catmatrix3linesodd}) is equal to  $3\frac{d-1}{2}$. 
If $h_{A^{(1)}}(\frac{d-1}{2})=d$, then   independent of  the choice of $G_d$,the Hilbert function $h_A(\frac{d-1}{2})$ is equal to the maximum possible. So,
$$\rk\Cat_F(\frac{d-1}{2})\geq  h_{A^{(1)}}(\frac{d-1}{2})+h_{A^{(2)}}(\frac{d-1}{2}-2)=d+\frac{d-1}{2}-1=3\frac{d-1}{2}.$$
If $h_{A^{(1)}}(\frac{d-1}{2})=d-1$, then 
$$
\rk\Cat_F(\frac{d-1}{2})\geq  h_{A^{(1)}}(\frac{d-1}{2})+h_{A^{(2)}}(\frac{d-1}{2}-2)=d-1+\frac{d-1}{2}-1=3\frac{d-1}{2}-1.
$$ Setting $G_{d-1}=0$ and $G_{d}=0$ provides that 
$ h_{A}(\frac{d-1}{2})=3\frac{d-1}{2}-1$. And setting $G_{d-1}=0$ and $G_{d}=\frac{Z^d}{d!}$ provides that $ h_{A}(\frac{d-1}{2})=3\frac{d-1}{2}$. In fact, with this choice  the last column of  $\Cat_F(\frac{d-1}{2})$ becomes non-zero and linearly independent from the previous columns.
\end{proof}
\begin{theorem}[Odd socle degree]\label{3linesHFtheorem-odd}
Let $A$ be an Artinian Gorenstein algebra with odd socle degree $d\geq 3$ and $\ell\in A_1$ such that $\ell^2\neq 0$ and $\ell^3=0$. Then Hilbert functions of $A$, $A^{(1)}$ and $A^{(2)}$ are completely determined by $(r,s,t)=( h_{A^{(2)}}(\frac{d-1}{2}),  h_{A^{(1)}}(\frac{d-1}{2}),  h_{A}(\frac{d-1}{2}))$. More precisely,   
\begin{itemize}
\item[$(1)$] if $d\geq 5$, $r\in [1,\frac{d-1}{2}-1], s\in[2r,\frac{d-1}{2}+r]$ and $t\in [2s-r,\frac{d-1}{2}+s+1]$, then 
\begin{equation}\label{HFodd(1)}
h_{A^{(2)}}(i)=\left\{
                \begin{array}{ll}
                  i+1 & 0\leq i\leq r-1,\\
                  r & r\leq i\leq\frac{d-1}{2},\\
                \end{array}
              \right.\quad
h_{A^{(1)}}(i)=\left\{
                \begin{array}{ll}
                  2i+1 &  0\leq i\leq r-1,\\
                   i+r+1 &  r\leq i\leq s-r-1,\\
                  s & s-r\leq i\leq \frac{d-1}{2}.\\
                \end{array}
              \right.
 \end{equation}
 \begin{itemize}
\item If $t=3r$ then there are two possible Hilbert functions for $A$
\begin{equation}\label{HFoddA(1,1)}
h_{A}(i)=\left\{
                \begin{array}{ll}
                 1 & i=0,\\
                  3i &  1\leq i\leq r-1,\\
                  3r & r\leq i\leq\frac{d-1}{2}.\\
                \end{array}
              \right.
\text{and} \quad 
h_{A}(i)=\left\{
                \begin{array}{ll}
                   1 & i=0,\\
                  3i &  1\leq i\leq r-1,\\
                  3r-1& i=r,\\
                  3r & r+1\leq i\leq \frac{d-1}{2},\\
                \end{array}
              \right.
 \end{equation}
\item otherwise
\begin{equation}\label{HFoddA(1,2)}
h_A(i)= \left\{
                \begin{array}{ll}
                   1 & i=0,\\
                  3i &  1\leq i\leq r,\\
                  2i+r+1& r+1\leq i\leq s-r-1,\\
                  2i+r+1& i=s-r, \hspace*{2mm}\text{if}\hspace*{2mm}t>2s-r\hspace*{2mm}\text{and}\hspace*{2mm}s>2r,\\
                  2i+r& i=s-r, \hspace*{2mm}\text{if}\hspace*{2mm}t>2s-r \hspace*{2mm}\text{and}\hspace*{2mm}s=2r,\\
                  i+s+1 & s-r+1\leq i\leq t-s-1,\\
                  t & t-s\leq i\leq \frac{d-1}{2}.\\
                \end{array}
              \right.
\end{equation}
 \end{itemize}
 \item[$(2)$] If $d\geq 3$, $r\in[1,\frac{d-1}{2}-1],s=\frac{d-1}{2}+r+1 $ and $t=d+r$, then 
\begin{equation}\label{HFodd(2)}
h_{A^{(1)}}(i)= \left\{
                \begin{array}{ll}
                  2i+1 &  0\leq i\leq r,\\
                  i+1+r & r+1\leq i\leq\frac{d-1}{2},\\
                \end{array}
              \right. \text{and}\quad h_{A}(i)= \left\{
                \begin{array}{ll}
                1& i=0,\\
                  3i &  1\leq i\leq r+1,\\
                  2i+1+r & r+2\leq i\leq\frac{d-1}{2}.\\
                \end{array}
              \right. 
\end{equation}
\item[$(3)$] For $d\geq 3$ if  $r=\frac{d-1}{2}, s\in [d-1,d]$ and $t\in [\frac{d-1}{2}+s-1,3\frac{d-1}{2}]$, then for every $i\in [0,\frac{d-1}{2}-1]$
  \begin{equation}\label{HFodd(3)}
 h_{A^{(2)}}(i) = i+1, \quad h_{A^{(1)}}(i) = 2i+1\hspace*{2mm}\text{and} \hspace*{2mm} h_A(0)=1,  h_A(i) =3i.
  \end{equation}
 \end{itemize}
\end{theorem}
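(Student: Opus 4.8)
The plan is to follow the same strategy as in the proof of Theorem~\ref{3linesHFtheorem-even}, replacing the middle degree $\frac{d}{2}$ by $\frac{d-1}{2}$ and keeping careful track of the parities of the socle degrees $d$, $d-1$, $d-2$ of $A$, $A^{(1)}$, $A^{(2)}$ (so $A$ and $A^{(2)}$ have socle degree of the same parity, while $A^{(1)}$ alone has the opposite parity). The three Hilbert functions will be reconstructed in the order $h_{A^{(2)}}$, $h_{A^{(1)}}$, $h_A$, each time using two facts: first, that $A^{(2)}=S/\ann(G_{d-2})$ has codimension at most two and hence a unimodal Hilbert function that grows by at most one in each degree (codimension-two Artinian Gorenstein algebras satisfy the SLP); and second, that by Lemma~\ref{diffO-seq} each difference vector $h_{A^{(i)}}-(h_{A^{(i+1)}})_+$ is an O-sequence, in fact the Hilbert function of a codimension $\leq 2$ quotient, so it too grows by at most one per degree starting from value $1$. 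Since the Hilbert functions of Gorenstein algebras are symmetric, it suffices to determine all three functions up to their respective middle degrees.

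For part $(1)$ the argument is verbatim the one for Theorem~\ref{3linesHFtheorem-even}$(1)$: the condition $r\leq \frac{d-1}{2}-1$ forces $h_{A^{(2)}}(\frac{d-1}{2}-2)=h_{A^{(2)}}(\frac{d-1}{2}-1)=r$ because the socle degree $d-2$ of $A^{(2)}$ is odd; then $(h_{A^{(1)}}-(h_{A^{(2)}})_+)(\frac{d-1}{2})=s-r$ together with the O-sequence growth condition pins down $h_{A^{(1)}}$; finally $(h_A-(h_{A^{(1)}})_+)(\frac{d-1}{2})=t-s$ does the same for $h_A$, with the split into $t=3r$ (two possibilities, since unimodality only forces $h_A(r)\in\{3r-1,3r\}$) and $t>3r$. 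I will refer to the even case for these computations rather than repeat them.

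Part $(2)$ is the genuinely new case: here $s=\frac{d-1}{2}+r+1$ is the \emph{maximal} value allowed by Lemma~\ref{maxvaluesoddLemma}, equivalently $(h_{A^{(1)}}-(h_{A^{(2)}})_+)(\frac{d-1}{2})=s-r=\frac{d-1}{2}+1$, which is the largest value an O-sequence can reach in degree $\frac{d-1}{2}$, so $h_{A^{(1)}}-(h_{A^{(2)}})_+$ must increase by exactly one in every degree $0,\dots,\frac{d-1}{2}$; combined with $h_{A^{(2)}}(i)=\min(i+1,r)$ this yields the stated $h_{A^{(1)}}$. Likewise $t=d+r$ gives $(h_A-(h_{A^{(1)}})_+)(\frac{d-1}{2})=t-s=\frac{d-1}{2}$, again maximal, forcing strict growth and hence the stated $h_A$. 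Part $(3)$ is analogous to Theorem~\ref{3linesHFtheorem-even}$(2)$: $r=\frac{d-1}{2}$ makes $h_{A^{(2)}}$ strictly increasing up to the middle, which forces $h_{A^{(1)}}$ and then $h_A$ to be strictly increasing up to their middle degrees as well; the precise behaviour at the very top, together with the small base cases $d=3,5$, is handled by inspecting the relevant catalecticant matrix $\Cat_F(\frac{d-1}{2})$ in~(\ref{catmatrix3linesodd}) exactly as in the even case.

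The main obstacle I expect is bookkeeping rather than a conceptual difficulty: one must verify that in each of the three parts the growth bounds together with the boundary values at the middle degree uniquely pin down every intermediate value, and that the parity of the socle degree of each of $A$, $A^{(1)}$, $A^{(2)}$ is used correctly — particularly in part $(2)$, where the extremal value of $s$ has no analogue in the even-socle-degree theorem and where one must check that the forced strict growth of $h_{A^{(1)}}-(h_{A^{(2)}})_+$ is genuinely compatible with the existence statement of Lemma~\ref{maxvaluesoddLemma}.
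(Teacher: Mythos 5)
Your proposal follows the paper's proof essentially verbatim: part $(1)$ is reduced to the even-socle-degree theorem after noting that the relevant Hilbert function values stabilize around the middle degree, and parts $(2)$ and $(3)$ are handled by showing that the difference vectors $h_{A^{(1)}}-(h_{A^{(2)}})_+$ and $h_A-(h_{A^{(1)}})_+$ attain extremal values at degree $\frac{d-1}{2}$ and are therefore forced by Lemma \ref{diffO-seq}. One slip to repair in part $(2)$: since $(\mathbf{v})_+$ shifts degrees by one, $(h_A-(h_{A^{(1)}})_+)(\frac{d-1}{2})$ equals $t-h_{A^{(1)}}(\frac{d-1}{2}-1)=t-s+1=\frac{d-1}{2}+1$, not $t-s=\frac{d-1}{2}$; the value $\frac{d-1}{2}$ is \emph{not} the maximum a codimension-two O-sequence can attain in degree $\frac{d-1}{2}$ (that maximum is $\frac{d-1}{2}+1$) and would still allow one flat step, so the ``again maximal, forcing strict growth'' claim does not follow as written, although it does follow once the value is corrected (or, alternatively, once one observes that reaching $\frac{d-1}{2}$ in degree $\frac{d-1}{2}$ already forces the value $i+1$ in every degree $i<\frac{d-1}{2}$, with the top value supplied by the hypothesis $h_A(\frac{d-1}{2})=t$).
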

\begin{proof}
We first  prove part  $(1)$. The assumptions on $r$ and $s$ imply that 
$$ h_{A^{(1)}}(\frac{d-1}{2}-1)=h_{A^{(1)}}(\frac{d-1}{2})=h_{A^{(1)}}(\frac{d-1}{2}+1)=s,
$$
and 
$$ h_{A^{(2)}}(\frac{d-1}{2}-1)=h_{A^{(2)}}(\frac{d-1}{2})=r.
$$
Therefore, applying Theorem \ref{3linesHFtheorem-even} part $(1)$ for $d-1$ completes the proof of $(1)$.\par 
\noindent Now we show $(2)$. First note that $h_{A^{(2)}}$ is the same as the previous case. By the assumption we have that 
\begin{align*}
(h_{A^{(1)}}-(h_{A^{(2)}})_+)(\frac{d-1}{2}) &= h_{A^{(1)}}(\frac{d-1}{2})-h_{A^{(2)}}(\frac{d-1}{2}-1)\\
&= h_{A^{(1)}}(\frac{d-1}{2})-h_{A^{(2)}}(\frac{d-1}{2})\\
&=\frac{d-1}{2}+r+1-r= \frac{d-1}{2}+1.
\end{align*}
Using Lemma \ref{diffO-seq}, we get that
$(h_{A^{(1)}}-(h_{A^{(2)}})_+)(i) = i+1
$  for every $i\in[0,\frac{d-1}{2}]$.
Therefore, $h_{A^{(1)}}$ is what we claimed.
To obtain $h_A$, we note that 
\begin{align*}
(h_{A}-(h_{A^{(1)}})_+)(\frac{d-1}{2})  = h_{A}(\frac{d-1}{2})-h_{A^{(1)}}(\frac{d-1}{2}-1)= d+r- h_{A^{(1)}}(\frac{d-1}{2}-1).
\end{align*}
If $r<\frac{d-1}{2}-1$, then we have $h_{A^{(1)}}(\frac{d-1}{2}-1)=\frac{d-1}{2}+r$ and if $r=\frac{d-1}{2}-1$, then $h_{A^{(1)}}(\frac{d-1}{2}-1)=2(\frac{d-1}{2}-1)+1=d-2.$ In both cases we get that 
\begin{align*}
(h_{A}-(h_{A^{(1)}})_+)(\frac{d-1}{2})  = d+r-(d-2) = r+2 = \frac{d-1}{2}+1.
\end{align*}
So for every $i\in[0,\frac{d-1}{2}]$ we have $(h_{A}-(h_{A^{(1)}})_+)(i) = i+1,
$ and therefore
$
h_A(i)=i+1+h_{A^{(1)}}(i-1)
$
which implies the desired Hilbert function for $A$.

To prove $(3)$ we get $h_{A^{(2)}}$ by replacing $r$ by $\frac{d-1}{2}$ in the previous case. By the assumption we have that 
$$
\frac{d-1}{2}\leq (h_{A^{(1)}}-(h_{A^{(2)}})_+)(\frac{d-1}{2})\leq \frac{d-1}{2}+1.
$$
So for every $i\in [0,\frac{d-1}{2}-1]$  $$h_{A^{(1)}}(i)=i+1+h_{A^{(2)}}(i-1)=2i+1.$$
To obtain $h_A$ we observe that
\begin{align*}
(h_{A}-(h_{A^{(1)}})_+)(\frac{d-1}{2}) \geq \frac{d-1}{2}+s-1-(d-2)
=s-\frac{d-1}{2}\geq \frac{d-1}{2}.
\end{align*}
Therefore,($h_{A}-(h_{A^{(1)}})_+)(i)=i+1$ for every $i\in [0,\frac{d-1}{2}-1]$, and equivalently, we have $h_A(0)=1$ and $h_A(i)=i+1+2(i-1)+1=3i$ for every $i\in [1,\frac{d-1}{2}-1]$.
\end{proof}
We prove that the lists of rank matrices given in Theorems \ref{3linesHFtheorem-even} and \ref{3linesHFtheorem-odd} are exhaustive lists. 
\begin{theorem}\label{allHF´sPossible-even}
A vector of non-negative integers $h$ is the Hilbert function of some Artinian    Gorenstein algebra  $A=S/\ann(F)$ such that there exists a linear form $\ell$ satisfying $ \ell^2\neq 0$ and $\ell^3=0$ if and only if $h$ is equal to one of the Hilbert functions provided in Theorems \ref{3linesHFtheorem-even} and \ref{3linesHFtheorem-odd}.
\end{theorem}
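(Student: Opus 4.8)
\textbf{Overview.} The statement is a pair of inclusions, and essentially all of the numerical content has already been established: Lemmas~\ref{maxvaluesevenLemma} and \ref{maxvaluesoddLemma} pin down exactly which triples $(r,s,t)$ arise, while Theorems~\ref{3linesHFtheorem-even} and \ref{3linesHFtheorem-odd} express $h_A$, $h_{A^{(1)}}$ and $h_{A^{(2)}}$ in terms of $(r,s,t)$. So the plan is to assemble these, with one extra construction for a handful of ``boundary'' Hilbert functions.

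\textbf{The ``only if'' direction.} Let $A=S/\ann(F)$ and $\ell\in A_1$ with $\ell^2\neq0$ and $\ell^3=0$, and write $d\geq2$ for the socle degree. After the change of coordinates carried out at the start of \S\ref{length3} we may assume $\ell=x$ and $F=X^2G_{d-2}+XG_{d-1}+G_d$ as in \eqref{F}, and $\ell^2\neq0$ forces $G_{d-2}\neq0$. Attach to $(A,\ell)$ the triple $(r,s,t)$ of Lemma~\ref{maxvaluesevenLemma}, resp.\ Lemma~\ref{maxvaluesoddLemma}, according to the parity of $d$; the ``only if'' half of that lemma confines $(r,s,t)$ to one of its listed ranges. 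Feeding the triple into Theorem~\ref{3linesHFtheorem-even}, resp.\ Theorem~\ref{3linesHFtheorem-odd}, then identifies $h_A$ (and simultaneously $h_{A^{(1)}}$, $h_{A^{(2)}}$, hence by Proposition~\ref{diagprop} together with $\ell^3=0$ the entire rank matrix $M_{\ell,A}$) as one of the finitely many explicit sequences recorded there. So $h_A$ is on the list.

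\textbf{The ``if'' direction.} Conversely, let $h$ be one of the Hilbert functions written down in Theorem~\ref{3linesHFtheorem-even} or \ref{3linesHFtheorem-odd}; it comes tagged with a parity, a value of $d\geq2$, and a triple $(r,s,t)$ satisfying the constraints of the relevant lemma. For every triple for which the theorem determines $h_A$ \emph{uniquely}, the dual generator produced in the proof of that lemma (the explicit choices \eqref{G_(d-2)even(1)}, \eqref{G_(d-1)even(1)}, \eqref{G_(d)even(1)} and the analogous generators in the proof of Lemma~\ref{maxvaluesoddLemma}) yields an algebra $A$ with a linear form $\ell$ satisfying $\ell^2\neq0$, $\ell^3=0$ and, by the theorem, $h_A=h$. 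There remain the finitely many Hilbert functions for which the theorem permits two options --- the second sequences in \eqref{HFevenA(1,1)} and \eqref{HFoddA(1,1)} (case $t=3r$) and the alternative value of $h_A(s-r)$ in \eqref{HFevenA(1)o.w.} and \eqref{HFoddA(1,2)} when $s=2r<t$. For each of these I would keep $G_{d-2}$ as in the lemma and perturb $G_{d-1}$ (or $G_d$) by a single monomial invisible to the Hankel block $\mathbf{B}$, resp.\ $\mathbf{A}$, of the middle-degree catalecticant that governs $s$, resp.\ $t$ --- leaving $(r,s,t)$ untouched --- while it drops the rank of $\Cat_F$ by one precisely in the degree where the two options differ; a short Hankel-rank computation then gives $h_A=h$.

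\textbf{Main obstacle.} The ``only if'' inclusion and the uniquely-determined part of the ``if'' inclusion are bookkeeping on top of Lemmas~\ref{maxvaluesevenLemma}, \ref{maxvaluesoddLemma}, Theorems~\ref{3linesHFtheorem-even}, \ref{3linesHFtheorem-odd} and Proposition~\ref{diagprop}. The one genuinely new point, which I expect to be the main obstacle, is supplying honest witnesses for the boundary Hilbert functions above --- i.e.\ checking that \emph{both} sequences attached to a $t=3r$ triple (and both values of $h_A(s-r)$ in the $s=2r$ case) are actually realized --- which is exactly where the perturbed dual generators and their catalecticant rank computations enter.
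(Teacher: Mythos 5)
Your plan is essentially the paper's proof. The ``only if'' direction and the uniquely-determined part of the ``if'' direction are, as you say, an assembly of Lemmas~\ref{maxvaluesevenLemma} and \ref{maxvaluesoddLemma} with Theorems~\ref{3linesHFtheorem-even} and \ref{3linesHFtheorem-odd}, and the one genuinely new point is exactly the one you isolate: exhibiting witnesses for \emph{both} Hilbert functions attached to an ambiguous triple. Your proposed mechanism also coincides with the paper's: keep $G_{d-2}=\frac{Y^{r-1}Z^{d-r-1}}{(r-1)!(d-r-1)!}$ and take either $G_{d-1}=G_d=0$, which forces $h_A(r)=h_{A^{(2)}}(r-2)+h_{A^{(2)}}(r-1)+h_{A^{(2)}}(r)=3r-1$, or perturb by the single monomial $\frac{Y^rZ^{d-r}}{r!(d-r)!}$ in $G_d$, which leaves the middle-degree catalecticant rank (hence $(r,s,t)$) unchanged but makes the last column of $\Cat_F(r)$ nonzero and independent of the others, raising $h_A(r)$ to $3r$; the paper runs exactly this computation, treating $r=1$ separately by the same trick.

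One correction to your list of boundary cases: the case split at $i=s-r$ in the formula for $h_A$ when $t>3r$ (the lines distinguishing $s>2r$ from $s=2r$) is \emph{not} a second ambiguity. In that regime the proof of Theorem~\ref{3linesHFtheorem-even} forces $h_A(s-r)=s-r+1+h_{A^{(1)}}(s-r-1)$, and the two displayed values merely record the two possible values of $h_{A^{(1)}}(s-r-1)$ according to whether $s=2r$ or $s>2r$; both are determined by $(r,s,t)$. So there is nothing extra to realize there, and attempting to produce a witness for the ``other'' value would fail. This does not damage your argument --- that part of your boundary-case list is simply empty --- but the only triples carrying two distinct Hilbert functions are those with $t=3r$ (hence $s=2r$), and those are fully handled by the perturbation above.
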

\begin{proof}
In Lemmas \ref{maxvaluesevenLemma} and \ref{maxvaluesoddLemma} we provide the complete list of possible values for the maximum of the Hilbert function of any Artinian Gorenstein algebras $A$ where $\ell^3=0$. In fact, for each maximum value we produce a dual generator $F$ for $A$. On the other hand, in Theorems \ref{3linesHFtheorem-even} and \ref{maxvaluesoddLemma}  we prove that for each possible maximum value the Hilbert function of $A$ is uniquely determined by the maximum value in all the cases except when $r\in [1,\lfloor\frac{d}{2}\rfloor-1]$ and $t=3r$ for every $d\geq 4$, in which we have two possibilities for $h_A$. We show that both Hilbert functions provided for $h_A$ occur for some Artinian Gorenstein algebra $A$.\par 
First assume that $d\geq 6$, $r\in[2,\lfloor\frac{d}{2}\rfloor-1]$ and $t=3r$.  This  implies that $s=2r$.  Fixing $h_A(r)$ to be either $3r-1$ or $3r$ we provide a degree $d$ polynomial satisfying (\ref{F}) as the dual generator for Artinian Gorenstein algebra $A$ such that $h_A(\lfloor\frac{d}{2}\rfloor)=3r$.  
Pick the following monomial basis for $A_r$
$$\B_r=\{x^r,x^{r-1}y,x^{r-1}z,x^{r-2}y^2,\dots ,y^r,y^{r-1},z,\dots , z^r\}$$
so
\begin{equation}
\Cat_F(r)=\left[\begin{array}{@{}c|c|c@{}}
\mathbf{0}& \mathbf{0}&{\Cat_{G_{d-2}}{(r-2)}}
\\\hline
\mathbf{0}&\Cat_{G_{d-2}}{(r-1)}&\Cat_{G_{d-1}}{(r-1)}\\\hline
\Cat_{G_{d-2}}{(r)}&\Cat_{G_{d-1}}{(r)}&\Cat_{G_{d}}{(r)}
\end{array}
\right].\\
\end{equation}
This is equal to 
\begin{equation}\label{catmatrix3lines-r}
 \Cat_F(r) = \left[\begin{array}{@{}cccc|cccc|cccc@{}}
    0 & 0 & \cdots & 0 &  0 & 0 & \cdots & 0 & c_{0} & c_{1}&\cdots & c_{d-r} \\
     0 & 0 & \cdots & 0 & 0 & 0 &\cdots & 0 & c_{1} & c_{2} &\cdots & c_{d-r+1} \\
    \vdots & \vdots & \reflectbox{$\ddots$}  &\vdots & \vdots & \vdots & \reflectbox{$\ddots$}  &\vdots  & \vdots  &  \vdots & \reflectbox{$\ddots$} &\vdots\\
 0 & 0 & \cdots & 0 &  0 & 0 &\cdots & 0 &  c_{r-2} & c_{r-1} &\cdots & c_{d-2} \\\hline
  0 & 0 & \cdots & 0 &  c_{0} & c_{1}&\cdots & c_{d-r-1} & b_{0} & b_{1}&\cdots & b_{d-r} \\
 0 & 0 & \cdots & 0 &   c_{1} & c_{2} &\cdots & c_{d-r}  & b_{1} & b_{2} &\cdots & b_{d-+1} \\
    \vdots  &\vdots & \reflectbox{$\ddots$}  &  \vdots&   \vdots  &\vdots & \reflectbox{$\ddots$}  &  \vdots& \vdots  &  \vdots & \reflectbox{$\ddots$} &\vdots\\
  0 & 0 & \cdots & 0 &  c_{r-1} & c_{r} &\cdots & c_{d-2} & b_{r-1} & b_{r} &\cdots & b_{d-1} \\\hline
     c_{0} & c_{1}&\cdots & c_{d-r-2} & b_{0} & b_{1}&\cdots & b_{d-r-1} &a_{0} & a_{1}&\cdots & a_{d-r} \\
 c_{1} & c_{2} &\cdots & c_{d-r-1}  & b_{1} & b_{2} &\cdots & b_{d-r}& a_{1} & a_{2} &\cdots & a_{d-r+1} \\
     \vdots  &\vdots & \reflectbox{$\ddots$}  &  \vdots& \vdots  &  \vdots & \reflectbox{$\ddots$} &\vdots& \vdots  &  \vdots & \reflectbox{$\ddots$} &\vdots\\
    c_{r} & c_{r+1} &\cdots & c_{d-2} & b_{r} & b_{r+1} &\cdots & b_{d-1}& a_{r} & a_{r+1} &\cdots & a_{d} \\
     \end{array}\right].
\end{equation}
Using what we have shown in Lemmas \ref{maxvaluesevenLemma} and \ref{maxvaluesoddLemma} part $(1)$, setting $c_{d-r-1}=1$ and all other coefficients in the polynomial $F$ to be zero, or equivalently, 
\begin{equation}\label{3rFeven}
F = \frac{X^2 Y^{r-1}Z^{d-r-1}}{2(r-1)!(d-r-1)!}
\end{equation}
provides that $h_A(\lfloor\frac{d}{2}\rfloor)=3r$.
Therefore, since $G_{d-1}=G_d=0$ we get that 
$$
h_A(r)=\rk\Cat_F(r)=\rk\Cat_{F}(r-2)+ \rk\Cat_{F}(r-1)+\rk\Cat_{F}(r)=3r-1,
$$
where the ranks of $\Cat_{F}(r-2), \Cat_{F}(r-1)$ and $\Cat_{F}(r)$, or equally, $h_{A^{(2)}}(r-2),h_{A^{(2)}}(r-1)$ and $h_{A^{(2)}}(r)$ are given in Theorems \ref{3linesHFtheorem-even} and \ref{3linesHFtheorem-odd}.

In order to provide a polynomial $F$ as the dual generator of $A$ where $h_A(\lfloor\frac{d}{2}\rfloor)=3r=h_A(r)=3r$, we set $c_{d-r-1}=a_{d-r}=1$ and all other coefficients to be zero, so
\begin{equation}\label{3rFeven}
F = \frac{X^2 Y^{r-1}Z^{d-r-1}}{2(r-1)!(d-r-1)!} + \frac{Y^rZ^{d-r}}{r!(d-r)!}.
\end{equation}
We observe that setting $a_{d-r}=1$ in the matrices (\ref{catmatrix3lines}) and  (\ref{catmatrix3linesodd}), the number of linearly independent columns does not increase and is equal to $3r$. On the other hand, setting $a_{d-r}=1$ increases the number of linearly independent columns of $\Cat_F(r)$ in (\ref{catmatrix3lines-r}) by one. In fact, by setting $a_{d-r}=1$ the last column of (\ref{catmatrix3lines-r}) becomes non-zero and not included in the span of the  previous columns. Thus, the number of linearly independent columns in (\ref{catmatrix3lines-r}) is equal to $3r$,  so $h_A(r)=3r$.

\noindent Now assume that $d\geq 4$ and $r=1$. We use the same argument as the previous case for the following matrix.
\begin{equation}\label{catmatrix3lines(r=1)}
\Cat_F(r)=\left[\begin{array}{@{}c|c|c@{}}
 \mathbf{0}&{\Cat_{G_{d-2}}{(r-1)}}&{\Cat_{G_{d-1}}{(r-1)}}
\\\hline
\Cat_{G_{d-2}}{(r)}&\Cat_{G_{d-1}}{(r)}&\Cat_{G_{d}}{(r)}\\
\end{array},
\right]\\
\end{equation}
similarly setting $F = \frac{X^2 Z^{d-2}}{2(d-2)!}$  provides that $\rk\Cat_F(1)=2$, and setting $F = \frac{X^2 Z^{d-2}}{2(d-2)!} + \frac{Y Z^{d-1}}{(d-1)!}$ provides $\rk\Cat_F(1)=3$. We notice that in both cases $h_A(\lfloor\frac{d}{2}\rfloor)=3$.
\end{proof}
As an immediate consequence of above results we get the complete list of possible rank matrices for Artinian Gorenstein algebras and linear forms such that $\ell^2=0$. We may assume that $\ell\neq 0$, since otherwise multiplication map  by $\ell$ is trivial. So we have $A^{(1)}\neq 0$ and $A^{(i)}=0$, for all $i\geq 2$. We denote by $r$ and $s$ the maximum values for $h_{A^{(1)}}$ and $h_A$ respectively.
\begin{corollary}\label{2linescorollary}
There exists an Artinian Gorenstein algebra $A$ with socle degree $d\geq 2$ and $\ell\in A_1$ where $\ell\neq 0$ and $\ell^2=0$,  such that 
$$
(r,s)=\left(h_{A^{(1)}}(\lfloor\frac{d}{2}\rfloor), h_{A}(\lfloor\frac{d}{2}\rfloor)\right)
$$
 if and only if 
 \begin{itemize}
 \item $r\in[1,\lceil\frac{d}{2}\rceil-1]$ and $s\in [2r,\lceil\frac{d}{2}\rceil+r ]$, for $d\geq 3$; or
 \item $r= \lceil\frac{d}{2}\rceil$ and $s=d$ if $d\geq 3$ is odd; $s=d,d+1$ if  $d\geq 2$ is even.
 \end{itemize}
 Moreover, the Hilbert functions of $A$ and $A^{(1)}$ are completely determined by $(r,s)$ as the following 
\begin{equation}\label{HF2lines}
h_{A^{(1)}}(i)=\left\{
                \begin{array}{ll}
                  i+1 & 0\leq i\leq r-1,\\
                  r & r\leq i\leq\lfloor\frac{d}{2}\rfloor.\\
                \end{array}
              \right.\quad
h_{A}(i)=\left\{
                \begin{array}{ll}
                  2i+1 &  0\leq i\leq r-1,\\
                   i+r+1 &  r\leq i\leq s-r-1,\\
                  s & s-r\leq i\leq \lfloor\frac{d}{2}\rfloor.\\
                \end{array}
              \right.
 \end{equation}
\end{corollary}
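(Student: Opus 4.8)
The plan is to deduce the corollary from the $\ell^3=0$ analysis by observing that $\ell^2=0$ collapses the chain $A\to A^{(1)}\to A^{(2)}$ to $A\to A^{(1)}$ with $A^{(1)}$ of codimension at most two. Concretely, assume $\ell\neq 0$ in $A$, choose coordinates so that $\ell=x$, and write $F=\sum_{i=0}^{d}X^iG_{d-i}$ with $G_{d-i}\in\K[Y,Z]$ homogeneous of degree $d-i$, following the normalization of Section \ref{codim3section}. Since $\K$ has characteristic zero, $x^2\circ F=\sum_{i\geq 2}i(i-1)X^{i-2}G_{d-i}=0$ forces $G_{d-i}=0$ for all $i\geq 2$, so $F=XG_{d-1}+G_d$ with $G_{d-1}\neq 0$ (because $x\circ F=G_{d-1}$ and $x\notin\ann(F)$). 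Hence $A^{(1)}=S/\ann(x\circ F)=S/\ann(G_{d-1})$ is an Artinian Gorenstein algebra of codimension at most two and socle degree $d-1$, while $A^{(i)}=0$ for $i\geq 2$.

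For the ``only if'' direction I would argue exactly as in Theorems \ref{3linesHFtheorem-even} and \ref{3linesHFtheorem-odd}, one level shorter. Since codimension $\leq 2$ Gorenstein algebras are complete intersections and satisfy the SLP, $h_{A^{(1)}}$ is the unimodal symmetric O-sequence of socle degree $d-1$ with maximal value $r:=h_{A^{(1)}}(\lfloor d/2\rfloor)$, namely $h_{A^{(1)}}(i)=\min(i+1,r)$ for $0\leq i\leq\lfloor d/2\rfloor$; this yields the stated formula for $h_{A^{(1)}}$ and the constraint $r\in[1,\lceil d/2\rceil]$, with $r=\lceil d/2\rceil$ precisely when $h_{A^{(1)}}$ is maximal. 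Next, by Lemma \ref{diffO-seq} the difference $h_A-(h_{A^{(1)}})_+$ is an O-sequence whose degree-one value is $h_1-1\leq 2$, hence the Hilbert function of an Artinian algebra of codimension at most two; by Macaulay's bound such a sequence grows by at most one per step and, once it stops being maximal, never increases again. Writing $s:=h_A(\lfloor d/2\rfloor)$ and combining the explicit $h_{A^{(1)}}$ above with the symmetry of $h_A$ about $\lfloor d/2\rfloor$, one forces $h_A-(h_{A^{(1)}})_+$ to equal $(1,2,3,\dots,s-r,s-r,\dots,s-r)$ through degree $\lfloor d/2\rfloor$; summing back with $(h_{A^{(1)}})_+$ recovers the stated formula for $h_A$ and the range $s\in[2r,\lceil d/2\rceil+r]$, together with the endpoints $s\in\{d,d+1\}$ (resp. $s=d$) in the extremal case $r=\lceil d/2\rceil$ with $d$ even (resp. odd). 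I would also point out that the ``$t=3r$'' bifurcation of Theorem \ref{3linesHFtheorem-even} cannot occur here: because $h_A$ is constant equal to $s$ from degree $s-r$ onward, the difference sequence is ``maximal, then constant'', which is uniquely determined, so no second Hilbert function appears.

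For the ``if'' direction one exhibits explicit dual generators: for each admissible $(r,s)$ take $F=XG_{d-1}+G_d$ with $G_{d-1},G_d$ monomials in $Y,Z$ (and $G_d$ a sum of two monomials in the top boundary case), chosen exactly as the analogous monomials used in the proofs of Lemmas \ref{maxvaluesevenLemma} and \ref{maxvaluesoddLemma}; the required catalecticant ranks are then computed verbatim, and in fact more easily, since with $G_{d-2}=0$ the first block row and first block column of \eqref{catmatrix3lines} and \eqref{catmatrix3linesodd} vanish and the catalecticant collapses to a $2\times 2$ block matrix involving only $G_{d-1}$ and $G_d$. The main obstacle is the boundary bookkeeping in the ``only if'' step: carefully matching floors and ceilings as the socle degree drops from $d$ (for $A$) to $d-1$ (for $A^{(1)}$), and correctly isolating the parity-dependent endpoints in the extremal case $r=\lceil d/2\rceil$; away from those boundaries everything is a routine specialization of the arguments already carried out for $\ell^3=0$.
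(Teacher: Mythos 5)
Your reduction to $F=XG_{d-1}+G_d$, the identification of $A^{(1)}$ with the codimension $\leq 2$ algebra $S/\ann(G_{d-1})$, and the use of the codimension $\leq 2$ O-sequence $h_A-(h_{A^{(1)}})_+$ are all sound, and they run parallel to the arguments inside Theorems \ref{3linesHFtheorem-even} and \ref{3linesHFtheorem-odd}; by contrast the paper disposes of the corollary in one line, by reading the pair $(h_{A},h_{A^{(1)}})$ off the last two non-zero diagonals of the rank matrices classified in those theorems. The problem is the step where you claim that this machinery ``recovers \dots the range $s\in[2r,\lceil d/2\rceil+r]$''. The only upper bound your argument actually yields is the Macaulay growth bound for the difference sequence, namely $(h_A-(h_{A^{(1)}})_+)(\lfloor\frac{d}{2}\rfloor)=s-r\leq\lfloor\frac{d}{2}\rfloor+1$. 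For odd $d$ this equals $\lceil\frac{d}{2}\rceil$ and gives the stated range, but for even $d$ it equals $\lceil\frac{d}{2}\rceil+1$, so the inequality $s\leq\lceil\frac{d}{2}\rceil+r$ does not follow from anything you have written --- and in fact it fails. Take $d=4$, $F=XZ^3+Y^4+Y^2Z^2$, $\ell=x$: then $x^2\circ F=0$ and $x\circ F=Z^3$, so $h_{A^{(1)}}=(1,1,1,1)$ and $r=1$, while $\ann(F)_2=\langle x^2,xy\rangle$ gives $h_A=(1,3,4,3,1)$ and $s=4=\lceil\frac{d}{2}\rceil+r+1$; the difference sequence $(1,2,3,2,0)$ is a perfectly admissible codimension-two O-sequence.

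This is not a removable defect of your write-up: the value $s=\lceil\frac{d}{2}\rceil+r+1$ for even $d$ and $r\leq\lceil\frac{d}{2}\rceil-1$ genuinely occurs, and it is exactly what case $(2)$ of Theorem \ref{3linesHFtheorem-odd} predicts when one passes from the triple $(h_A,h_{A^{(1)}},h_{A^{(2)}})$ there to the pair one level down (there $s=\frac{d-1}{2}+r+1$ with $A^{(1)}$ of even socle degree $d-1$). So the ``boundary bookkeeping'' you flagged as the main obstacle is precisely where the proof breaks, but not only in the extremal case $r=\lceil\frac{d}{2}\rceil$: for even $d$ and non-extremal $r$ no argument can close the gap between $\lceil\frac{d}{2}\rceil+r$ and $\lceil\frac{d}{2}\rceil+r+1$, because the larger value is attained. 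A correct treatment must either enlarge the list of admissible pairs $(r,s)$ for even $d$ accordingly (the ``Moreover'' formula \eqref{HF2lines} does extend verbatim to $s=\lceil\frac{d}{2}\rceil+r+1$), or explain what additional hypothesis is meant to exclude it; as written, both your proposal and the target statement assert a bound that the construction in your own ``if'' direction, applied with a suitable $G_d$, already violates.
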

\begin{proof}
The proof is immediate by considering rank matrices with two non-zero diagonals given by $h_{A^{(1)}}$ and $h_{A^{(2)}}$ provided in Theorems \ref{3linesHFtheorem-even} and  \ref{3linesHFtheorem-odd}.
\end{proof}
\begin{remark}
The above threorems provide complete lists of rank matrices, $M_{\ell,A}$, for Artinian Gorenstein algebras $A$ of codimension two and three satisfying $\ell^3=0$. In fact, there might exists a linear form $\ell^\prime\neq \ell$ such that $\ell^\prime= 0$. 
\end{remark}
We are now able to formulate our last result which provides a formula to compute Jordan types of Artinian Gorenstein algebras with parts of length at most four in terms of at most three parameters $(r,s,t)$ in the above theorems.
Using Remark \ref{r_ij-remark}, we provide the formulas in terms of the ranks of mixed Hessians in certain degrees.
\begin{theorem}\label{JT-theorem}
Let $A=S/\ann(F)$ be an Artinian  Gorenstein algebra with socle degree $d\geq 2$ and  $\ell\neq 0$ be a linear form such that $\ell^4=0$. The Jordan type $P_{\ell,A}$ is one of the followings.
\begin{itemize}
\item If $\ell^3\neq 0$ then the Jordan type partition of $A$ for $\ell$ is given by 
\begin{equation}\label{JT4}
P_{\ell,A}=(\underbrace{4,\dots ,4}_{\Delta^2\mathbf{d}(3)},\underbrace{3,\dots ,3}_{\Delta^2\mathbf{d}(2)}, \underbrace{2,\dots ,2}_{\Delta^2\mathbf{d}(1)},\underbrace{1,\dots ,1}_{\Delta^2\mathbf{d}(0)}),
\end{equation}
where $\mathbf{d}=(\dim_\mathsf{k} A,\dim_\mathsf{k} A^{(1)},\dim_\mathsf{k} A^{(2)}, \dim_\mathsf{k} A^{(3)})$ and the Hilbert functions of $A^{(1)}$, $A^{(2)}$ and $A^{(3)}$ are given in Theorems  \ref{3linesHFtheorem-even} and  \ref{3linesHFtheorem-odd} for parameters 
$$(r,s,t)=\left(\rk\Hess_\ell^{(\lfloor\frac{d}{2}\rfloor-1, \lceil\frac{d}{2}\rceil-2)}(F), \rk\Hess_\ell^{(\lfloor\frac{d}{2}\rfloor-1, \lceil\frac{d}{2}\rceil-1)}(F),\rk\Hess_\ell^{(\lfloor\frac{d-1}{2}\rfloor, \lfloor\frac{d}{2}\rfloor)}(F)\right).$$ 
\noindent Moreover, if $t\neq 3r$, then $P_{\ell,A}$ is uniquely determined by non-zero integers $(r,s,t)$. Otherwise, if $t=3r$, then $P_{\ell,A}$ is uniquely determined by non-zero integers \linebreak$(r,\rk\Hess_\ell^{(r,d-r-1)}(F)).$

\item If $\ell^3=0$ and $\ell^2\neq 0$, then 
\begin{equation}\label{JT3}
P_{\ell,A}=(\underbrace{3,\dots ,3}_{\Delta^2\mathbf{d}(2)},\underbrace{2,\dots ,2}_{\Delta^2\mathbf{d}(1)}, \underbrace{1,\dots ,1}_{\Delta^2\mathbf{d}(0)}),
\end{equation}
where $\mathbf{d}=(\dim_\mathsf{k} A,\dim_\mathsf{k} A^{(1)},\dim_\mathsf{k} A^{(2)})$ and the Hilbert functions of $A^{(1)}$ and $A^{(2)}$ are given in Corollary \ref{2linescorollary} for parameters  
 $$(r,s)=\left(\rk\Hess_\ell^{(\lfloor\frac{d-1}{2}\rfloor, \lfloor\frac{d}{2}\rfloor)}(F),\rk\Hess_\ell^{(\lfloor\frac{d-1}{2}\rfloor, \lfloor\frac{d}{2}\rfloor-1)}(F)\right).$$
 Moreover, $P_{\ell,A}$ is uniquely determined by non-zero integers $(r,s).$

\item If $\ell^2=0$ and $\ell\neq 0$, then 
\begin{equation}\label{JT2}
P_{\ell,A}=(\underbrace{2,\dots ,2}_{\Delta^2\mathbf{d}(1)},\underbrace{1,\dots ,1}_{\Delta^2\mathbf{d}(0)}),
\end{equation}
where $\mathbf{d}=(\dim_\mathsf{k} A,\dim_\mathsf{k} A^{(1)})$ 
where the Hilbert function of $A^{(1)}$ is given in Corollary \ref{2linescorollary} for parameter $$r=\rk\Hess_\ell^{(\lfloor\frac{d-1}{2}\rfloor, \lfloor\frac{d}{2}\rfloor)}(F).$$
Moreover, $P_{\ell,A}$ is uniquely determined by the non-zero integer $r.$
\end{itemize}
\end{theorem}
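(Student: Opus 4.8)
By Proposition~\ref{JT} the partition $P_{\ell,A}$ is the conjugate of
$$\big(\dim_\K A^{(0)}-\dim_\K A^{(1)},\ \dim_\K A^{(1)}-\dim_\K A^{(2)},\ \dim_\K A^{(2)}-\dim_\K A^{(3)},\ \dim_\K A^{(3)}\big),$$
so $P_{\ell,A}$ is completely determined by the integers $\dim_\K A^{(i)}=\sum_{j}h_{A^{(i)}}(j)$; since $\ell^4=0$ we have $A^{(i)}=0$ for $i\ge 4$, and the three displayed shapes of $P_{\ell,A}$ with multiplicities $\Delta^2\mathbf d$ are exactly what Proposition~\ref{JT} outputs according to whether $\ell^3\ne 0$, $\ell^3=0\ne\ell^2$, or $\ell^2=0$. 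As $h_A$ and $\dim_\K A$ are part of the data, the task reduces to determining $h_{A^{(1)}},h_{A^{(2)}},h_{A^{(3)}}$.

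The plan is to reduce to the classifications already proved by passing to $A^{(1)}=S/\ann(\ell\circ F)$, again an Artinian Gorenstein quotient of $S=\K[x,y,z]$ but of socle degree $d-1$. One checks directly that $(A^{(1)})^{(j)}=S/\ann\big(\ell^{j}\circ(\ell\circ F)\big)=A^{(j+1)}$ for all $j$, and that $\ell^{k}=0$ on $A^{(1)}$ if and only if $\ell^{k+1}\circ F=0$, i.e.\ if and only if $\ell^{k+1}=0$ on $A$; thus the order of nilpotency of $\ell$ drops by exactly one under this reduction. Hence: if $\ell^3\ne 0$ on $A$ then on $A^{(1)}$ we have $\ell^2\ne0$ and $\ell^3=0$, so Theorem~\ref{3linesHFtheorem-even} (when $d$ is odd) or Theorem~\ref{3linesHFtheorem-odd} (when $d$ is even) applies to $A^{(1)}$ and returns $h_{A^{(1)}}=h_{(A^{(1)})^{(0)}}$, $h_{A^{(2)}}=h_{(A^{(1)})^{(1)}}$ and $h_{A^{(3)}}=h_{(A^{(1)})^{(2)}}$ in terms of the three maximal-value parameters of $A^{(1)}$; if $\ell^3=0\ne\ell^2$ then on $A^{(1)}$ we have $\ell\ne0$ and $\ell^2=0$, so Corollary~\ref{2linescorollary} applies to $A^{(1)}$ and returns $h_{A^{(1)}}$ and $h_{A^{(2)}}$ from two maximal-value parameters; and if $\ell^2=0\ne\ell$ we need only $h_{A^{(1)}}$, which is the ``$A^{(1)}$-part'' of Corollary~\ref{2linescorollary} applied to $A$ itself and depends on the single parameter $r$.

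Next I would translate the maximal-value parameters into ranks of mixed Hessians of $F$ by combining Remark~\ref{r_ij-remark} with Proposition~\ref{diagprop}: for $0\le i<j$,
$$\rk\Hess_\ell^{(i,\,d-j)}(F)=(M_{\ell,A})_{i,j}=\rk\big(\times\ell^{\,j-i}\colon A_i\to A_j\big)=h_{A^{(j-i)}}(i).$$
Each $h_{A^{(k)}}$ is symmetric and attains its maximum at its middle degree, so substituting the middle degrees of $A^{(1)},A^{(2)},A^{(3)}$ and carrying out a short bookkeeping with floors and ceilings (separating $d$ even from $d$ odd, as in the statement) identifies $r,s,t$ — respectively $(r,s)$, respectively $r$ — with the mixed Hessians displayed; in the degenerate subcase the extra parameter is $h_{A^{(1)}}(r)=\rk\Hess_\ell^{(r,\,d-r-1)}(F)$.

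Finally, for the uniqueness assertions: Theorems~\ref{3linesHFtheorem-even} and~\ref{3linesHFtheorem-odd} state that $(r,s,t)$ determines $h_{A^{(1)}},h_{A^{(2)}},h_{A^{(3)}}$ except precisely when $t=3r$, where there are exactly two admissible Hilbert functions for the top algebra (here $A^{(1)}$), distinguished by the value $h_{A^{(1)}}(r)\in\{3r-1,3r\}$, and Theorem~\ref{allHF´sPossible-even} guarantees both occur; the analogous statements from $(r,s)$ and from $r$ come from Corollary~\ref{2linescorollary}. Together with the known $\dim_\K A$ this pins down every $\dim_\K A^{(i)}$, hence $\Delta^2\mathbf d$ and $P_{\ell,A}$. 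I expect the genuinely load-bearing points to be the two structural facts underlying the reduction — that $A^{(1)}$ stays within the hypotheses of the codimension-two and codimension-three classifications with the nilpotency order of $\ell$ lowered by one — together with the routine but error-prone matching of floor/ceiling indices between the even and odd socle-degree cases; the rest is assembling the cited results.
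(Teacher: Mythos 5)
Your proposal is correct and follows essentially the same route as the paper: reduce via Proposition \ref{JT} to computing the $\dim_\K A^{(i)}$, pass to $A^{(1)}=S/\ann(\ell\circ F)$ (socle degree $d-1$, with the nilpotency order of $\ell$ dropped by one and $(A^{(1)})^{(j)}=A^{(j+1)}$), apply Theorems \ref{3linesHFtheorem-even}/\ref{3linesHFtheorem-odd} or Corollary \ref{2linescorollary} to $A^{(1)}$, and translate the maximal-value parameters into mixed Hessian ranks via Remark \ref{r_ij-remark} and Proposition \ref{diagprop}, with the $t=3r$ ambiguity resolved by $h_{A^{(1)}}(r)=\rk\Hess_\ell^{(r,d-r-1)}(F)$. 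Your write-up is in fact somewhat more explicit than the paper's about the structural facts underlying the reduction.
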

\begin{proof}
First assume that $\ell^3\neq 0$ and notice that the socle degree of  $A^{(1)}=S/\ann(\ell\circ F)$ equals to $d-1$. Recall from Remark \ref{r_ij-remark} that
$$r=\rk\Hess_\ell^{(\lfloor\frac{d}{2}\rfloor-1,\lceil\frac{d}{2}\rceil-2 )}(F)= h_{A^{(3)}}(\lfloor\frac{d}{2}\rfloor-1), $$
$$s=\rk\Hess_\ell^{(\lfloor\frac{d}{2}\rfloor-1,\lceil\frac{d}{2}\rceil-1 )}(F)= h_{A^{(2)}}(\lfloor\frac{d}{2}\rfloor-1), $$
and 
$$t=\rk\Hess_\ell^{(\lfloor\frac{d-1}{2}\rfloor,\lfloor\frac{d}{2}\rfloor )}(F)= h_{A^{(1)}}(\lfloor\frac{d-1}{2}\rfloor).$$
Then using Theorems  \ref{3linesHFtheorem-even} and \ref{3linesHFtheorem-odd}, we get the ranks of multiplication maps by $\ell$,  $\ell^2$ and $\ell^3$ on $A$ in various degrees from the rank matrix of $A^{(1)}$ in terms of $r,s,t$. Then using Proposition \ref{JT}, we get $P_{\ell,A}$, as we claimed in  (\ref{JT4}). Moreover, we proved in Theorems  \ref{3linesHFtheorem-even} and \ref{3linesHFtheorem-odd} that the rank matrix of $A^{(1)}$ is uniquely determined in terms of $r,s$ and $t$ except when $t=3r$. In this case, there are two possible rank matrices for $A^{(1)}$ that is determined uniquely in terms of $r$ and $\Hess_\ell^{(r,d-r-1)}(F)$.

Now suppose that $\ell^3=0$ and $\ell^2\neq 0$. Ranks of multiplication maps on $A$ by $\ell$ and $\ell^2$ are uniquely determined by $r$ and $s$ in Corollary \ref{2linescorollary}, where 
$$r=\rk\Hess_\ell^{(\lfloor\frac{d-1}{2}\rfloor, \lfloor\frac{d}{2}\rfloor)}(F)=h_{A^{(2)}}(\lfloor\frac{d-1}{2}\rfloor),\hspace*{2mm} \text{and} \hspace*{2mm}  s=\rk\Hess_\ell^{(\lfloor\frac{d-1}{2}\rfloor, \lfloor\frac{d}{2}\rfloor-1)}(F)=h_{A^{(1)}}(\lfloor\frac{d-1}{2}\rfloor).$$ Therefore, Proposition \ref{JT} implies that $P_{\ell,A}$ is equal to (\ref{JT3}).

Assume that $\ell^2=0$ and $\ell\neq 0$ and that $r=\rk\Hess_\ell^{(\lfloor\frac{d-1}{2}\rfloor, \lfloor\frac{d}{2}\rfloor)}(F)=h_{A^{(1)}}(\lfloor\frac{d-1}{2}\rfloor)$.  Then, Corollary \ref{2linescorollary} provides the rank  of multiplication map by $\ell$, by providing  $h_{A^{(1)}}$ which implies the desired Jordan type in this case.
\end{proof}
\begin{remark}
One may use  \ref{3linesHFtheorem-even} and \ref{3linesHFtheorem-odd} to get the Jordan degree types with parts of length at most four, similar to the above theorem. Thus, such Jordan degree type is also determined uniquely by at most the ranks of three mixed Hessians.\par
More precise formulas for $P_{\ell,A}$ could be obtained directly from rank matrices provided in Theorems \ref{3linesHFtheorem-even} and \ref{3linesHFtheorem-odd}. 
\end{remark}
\begin{example}
Let $A=S/\ann(F)$ be an Artinian Gorenstein algebra where $$F=X^3Y^4+X^3Z^4+X^2YZ^4+Y^3Z^4.$$ We have that 
$$\rk\Hess^{(2,2)}_x(F)=2, \quad \rk\Hess^{(2,3)}_x(F)=4, \hspace*{2mm}\text{and}\hspace*{2mm}\rk\Hess^{(3,3)}_x(F)=7.$$
So using Theorem \ref{3linesHFtheorem-even} for $(r,s,t)=(2,4,7)$ we get the rank matrix for $A^{(1)}=S/\ann(x\circ F)$ and $\ell=x$ that is equal to
$$M_{x,A^{(1)}}= \begin{pmatrix}
1&1&1&0&0&0&0\\
0&3&3&2&0&0&0\\
0&0&6&4&2&0&0\\
0&0&0&7&4&2&0\\
0&0&0&0&6&3&1\\
0&0&0&0&0&3&1\\
0&0&0&0&0&0&1\\
\end{pmatrix}.
$$
Therefore, using  Equation \ref{JT4} we get the Jordan type of $A$ and $x$, which is equal to
$$
P_{x,A}=(\underbrace{4,\dots ,4}_8,\underbrace{2,\dots ,2}_3, \underbrace{1,\dots ,1}_{\dim_{\mathsf{k}}A-38})= (\underbrace{4,\dots ,4}_8,\underbrace{2,\dots ,2}_3)
$$
sicne we have $h_A=(1,3,6,9,9,6,3,1)$.
Moreover, using the correspondence to the Jordan degree type matrix in Proposition \ref{rkmatrix-1-1-JDT-prop} we get that the Jordan degree type partition of $A$ for $x$ is equal to $\mathcal{S}_{x,A}=(4_0,4_1,4_1,4_2,4_2,4_3,4_3,4_4,2_2, 2_3,2_4).$
\end{example}

Based on computations in Macaulay2, for large number of cases up to socle degree nine, we have no example of Artinian Gorenstein algebras over polynomial rings with three variables that the necessary conditions given in Lemmas \ref{diffO-seq} and \ref{additiveRank} are not sufficient. So we pose the following conjecture.

\begin{conjecture}
Let $M$ be an upper triangular matrix of size $d+1$ with non-negative entries. Then $M$ is the rank matrix of some Artinian Gorenstein algebra $A$ of codimension three and linear form $\ell\in A_1$, if and only if the following conditions are satisfied.
\begin{itemize}
\item[$(i)$] For every $0\leq i\leq d$, $\diag(i,M)$ is an O-sequences, and $h_A=\diag(0,M)$;
\item[$(ii)$] for every $0\leq i\leq d-1$, the difference vector $\diag(i,M)-\left(\diag(i+1,M)\right)_+$ is an O-sequences;
\item[$(iii)$] for any $2\times 2$ square submatrix of successive entries on and above the diagonal of $M$ of the form $\begin{pmatrix}
u&v\\
w&z\\
\end{pmatrix}$ we have that $w+v\geq u+z$.
\end{itemize}
\end{conjecture}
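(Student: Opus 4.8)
The ``only if'' direction is immediate: it is exactly Corollary \ref{cor-rkmatrix}, whose derivation through Proposition \ref{diagprop} and Lemmas \ref{diffO-seq} and \ref{additiveRank} never uses the number of variables and hence applies to the codimension three case verbatim. Thus the entire content of the conjecture is the converse, a \emph{realizability} statement: every upper triangular $M$ of size $d+1$ with non-negative entries satisfying $(i)$--$(iii)$ occurs as $M_{\ell,A}$ for some $A=\K[x,y,z]/\ann(F)$ and some $\ell\in A_1$. My plan is to prove this by constructing a dual generator explicitly, extending the method of Section \ref{codim3section} from three non-zero diagonals to an arbitrary number.

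First I would set up the reduction. Let $m$ be the number of non-zero diagonals of $M$; by Proposition \ref{diagprop} this must equal the nilpotency index of the sought $\ell$, so after a change of coordinates I look for $\ell=x$ and a dual generator $F=\sum_{i=0}^{m-1}X^{i}G_{d-i}$ with each $G_{d-i}\in\K[Y,Z]_{d-i}$. Then $x^{k}\circ F=\sum_{i=k}^{m-1}\tfrac{i!}{(i-k)!}X^{i-k}G_{d-i}$, so $A^{(k)}=S/\ann(x^{k}\circ F)$ depends only on $G_{d-k},\dots,G_{d-m+1}$, and its Hilbert function must equal $\diag(k,M)$. Because each $G_{d-i}$ is binary, $\K[y,z]/\ann(G_{d-i})$ is a codimension $\le 2$ Gorenstein algebra and therefore has the SLP; this gives complete control of its catalecticant ranks in every degree and lets me place a strong Lefschetz element so that the relevant corner minors attain maximal rank, exactly as in the proof of Lemma \ref{maxvaluesevenLemma}. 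Moreover the middle catalecticant of $F$ is block-triangular with the catalecticants of the $G_{d-i}$ on its anti-diagonal blocks, as displayed in (\ref{catmatrix3lines}) and (\ref{catmatrix3linesodd}), so that $h_{A^{(k)}}$ is computed as a sum of ranks of such binary catalecticant blocks.

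Next I would induct on $m$. For $m\le 3$ the base case is precisely Theorems \ref{3linesHFtheorem-even} and \ref{3linesHFtheorem-odd} and Corollary \ref{2linescorollary}: here one checks that ``$M$ satisfies $(i)$--$(iii)$'' is equivalent to ``$M$ appears on those explicit lists,'' a finite numerical verification whose only delicate point is the $t=3r$ bifurcation. For the inductive step I build $F$ from the inside out. Passing to diagonals $1,\dots,m-1$ of $M$ produces the candidate rank matrix of $A^{(1)}$, which has socle degree $d-1$ and nilpotency index $m-1$ and inherits conditions $(i)$--$(iii)$; by the inductive hypothesis it is realized by some $A^{(1)}=\K[x,y,z]/\ann(\tilde F)$ with $\tilde\ell=x$, and normalizing $\tilde F=\sum_{j\ge 0}X^{j}\tilde G_{d-1-j}$ against $x\circ F=\sum_{i\ge 1}i\,X^{i-1}G_{d-i}$ fixes the interior forms $G_{d-1},\dots,G_{d-m+1}$ via $i\,G_{d-i}=\tilde G_{d-i}$. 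Since $x\circ G_{d}=0$, adjoining the single top form $G_{d}\in\K[Y,Z]_d$ leaves $A^{(1)}$ and every deeper $A^{(k)}$ untouched, so all previously realized diagonals are preserved; by Lemma \ref{diffO-seq} the increment $h_{A}-(h_{A^{(1)}})_{+}$ that $G_{d}$ must produce is exactly the prescribed O-sequence coming from $(ii)$ in the row $i=0$. The problem reduces to realizing this increment, in every degree simultaneously, through the rank profile of $\Cat_{G_{d}}$ inside the bottom-right block of $\Cat_F$.

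The hard part is precisely this last step, carried out uniformly in $m$: one must show that conditions $(ii)$ and $(iii)$ \emph{together} guarantee the existence of a binary form $G_{d}$ producing the required increment, with no additional obstruction arising from the overlap of $\Cat_{G_{d}}$ with the already-fixed interior blocks $\Cat_{G_{d-1}},\dots,\Cat_{G_{d-m+1}}$. Concretely, one needs to translate the $2\times2$ inequality $(iii)$ into a statement about which rank profiles of a single binary catalecticant are compatible, in all degrees at once, with the fixed interior. The theorems for $m\le 3$ accomplish this by an intricate degree-by-degree catalecticant analysis, and it is exactly the step whose general validity I do not see how to control for large $m$: the necessity of $(i)$--$(iii)$ is free and realizability for $m\le 3$ is Section \ref{codim3section}, but the \emph{sufficiency of the same three local conditions} for every nilpotency index is the open point, supported so far only by the Macaulay2 evidence cited just before the conjecture.
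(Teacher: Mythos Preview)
The statement you are attempting is presented in the paper as a \emph{conjecture}, not a theorem: the paper offers no proof of the sufficiency direction, only the Macaulay2 evidence mentioned in the paragraph preceding it. So there is no ``paper's own proof'' to compare your argument to. Your treatment of the ``only if'' direction is correct and is exactly Corollary~\ref{cor-rkmatrix}, and you are right that this direction is already established in full generality.

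Your inductive strategy for sufficiency is a natural first approach, but---as you yourself flag at the end---it does not close. Two specific gaps deserve emphasis. First, even the base case $m\le 3$ is not actually settled: Theorems~\ref{3linesHFtheorem-even}, \ref{3linesHFtheorem-odd} and Theorem~\ref{allHF´sPossible-even} give the complete list of rank matrices for $\ell^3=0$, but the paper never verifies that this list coincides with the set of matrices satisfying $(i)$--$(iii)$ with at most three non-zero diagonals. Your ``finite numerical verification'' is a genuine claim that still needs to be carried out. Second, the inductive step has a structural problem beyond the one you identify: the hypothesis produces \emph{some} realization $(G_{d-1},\dots,G_{d-m+1})$ of the inner diagonals, but the existence of a compatible $G_d$ may depend on \emph{which} realization was chosen. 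The analysis in Lemmas~\ref{maxvaluesevenLemma} and \ref{maxvaluesoddLemma} works precisely because the inner forms are placed with specific Lefschetz elements and specific corner-minor alignments; an arbitrary realization handed to you by induction need not have these features. To make the induction go through you would need to strengthen the inductive statement to assert the existence of a realization in some prescribed normal form that is guaranteed to be extendable---and formulating and proving that stronger statement is essentially the content of the conjecture.
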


\section{Acknowledgment}
The author would like to thank Mats Boij for many helpful discussions and  Anthony Iarrobino for his comments on the first draft of this paper. Experiments using the algebra software Macaulay2 \cite{Mac2} were essential to get the ideas behind some of the proofs. This work was supported by the grant VR2013-4545. 
\bibliography{my.bib}{}
\bibliographystyle{plain}
\end{document}